\theoremstyle{definition}
\theoremstyle{plain}
\DeclareMathOperator{\Aut}{Aut}
\newcommand{\C}{\mathbb{C}}
\newcommand{\wt}{\textup{wt}}
\theoremstyle{plain}
\newtheorem{theorem}{Theorem}
\newtheorem{lemma}{Lemma}
\newtheorem{proposition}{Proposition}
\theoremstyle{definition}
\newtheorem{definition}{Definition}
\theoremstyle{remark}
\newtheorem{remark}{Remark}
\newcommand{\Z}{\mathbb{Z}}
\numberwithin{equation}{section} 
\newcommand{\N}{\ensuremath{\mathbb {N}}}
\newcommand{\E}{\ensuremath{\mathcal{E}}}
\newcommand{\g}{\ensuremath{\Gamma}}
\newcommand{\ps}{{\raise 1pt\hbox{\tiny (}}}
\newcommand{\pss}{{\raise 1pt\hbox{\tiny [}}}
\newcommand{\pdd}{{\raise 1pt\hbox{\tiny ]}}}
\newcommand{\pd}{{\raise 1pt\hbox{\tiny )}}}
\newcommand{\bs}{{\raise 1pt\hbox{\tiny [}}}
\newcommand{\bd}{{\raise 1pt\hbox{\tiny ]}}}
\def\cross{\mathinner{\mathrel{\raise0.8pt\hbox{$\scriptstyle>$}}
                 \joinrel\mathrel\triangleleft}}
\def\K{\mathcal{K}}
\def\Hom{\mathop\text{\rm Hom}\nolimits}
\newcommand{\be}{\begin{equation}}
\newcommand{\ee}{\end{equation}}
\newcommand{\nn}{\nonumber \\}
\newcommand{\nc}{\newcommand}
\nc{\cali}{\mathcal}
\nc{\on}{\operatorname}
\nc{\Wick}{{\mb :}}
\nc{\ddz}{\frac{\partial}{\partial z}}
\nc{\ch}{\mbox{ch}}
\nc{\Oo}{{\cali O}}
\nc{\cond}{|\,}
\nc{\bib}{\bibitem}
\nc{\pone}{\Pro^1}
\nc{\pa}{\partial}
\nc{\arr}{\rightarrow}
\nc{\larr}{\longrightarrow}
\nc{\ket}{)}
\nc{\bra}{(}
\nc{\gam}{\bar{\gamma}}
\nc{\ep}{\epsilon}
\nc{\su}{\widehat{{\mf s}{\mf l}}_2}
\nc{\sw}{{\mf s}{\mf l}}
\nc{\h}{{\mf h}}
\nc{\n}{{\mf n}}
\nc{\ab}{\mf{a}}
\nc{\is}{{\mb i}}
\nc{\js}{{\mb j}}
\nc{\He}{{\cali H}}
\nc{\inv}{^{-1}}
\nc{\ol}{\overline}
\nc{\wh}{\widehat}
\nc{\dst}{\displaystyle}
\nc{\delt}{\partial_t}
\nc{\ddt}{\frac{\partial}{\partial t}}
\nc{\delx}{\partial_x}
\nc{\mb}{\mathbf}
\nc{\mf}{\mathfrak}
\nc{\mbb}{\mathbb}
\nc{\Ctt}{\C((t))}
\nc{\Ct}{\C[t,t\inv]}
\nc{\ghat}{\wh{\g}}
\nc{\un}{\underline}
\nc{\mc}{\mathcal}
\nc{\BB}{{\mc B}}
\nc{\bb}{{\mf b}}
\nc{\kk}{{\mf k}}
\nc{\frob}{\times}
\nc{\sm}{\setminus}
\nc{\Pp}{{\mathbb P}^1}
\nc{\Aa}{{\mc A}}
\nc{\AutO}{\on{Aut}\Oo}
\nc{\AUTO}{\un{\on{Aut}}\Oo}
\nc{\AUTK}{\un{\on{Aut}}\K}
\nc{\Heout}{\He_{\out}}
\nc{\Hetil}{{\widetilde\He}}
\nc{\wb}{\overline}
\nc{\Res}{\on{Res}}
\nc{\pitil}{\Pi}
\nc{\Ctil}{\wt{C}}
\nc{\auto}{\on{Aut} \Oo}
\nc{\phitil}{\wt{\phi}}
\nc{\gz}{\g_{\vec z}}
\nc{\tensorM}{\bigotimes_{i=1}^N{\mathbb M}_i}
\nc{\tensorW}{\bigotimes_{i=1}^N W_{\nu_i,k}}
\nc{\out}{\on{out}}
\nc{\m}{{\mathfrak m}}
\nc{\gx}{\g^0_{\vec x}}
\nc{\hx}{\He^0_{\vec x}}
\nc{\tensorpi}{\pi_{\nu_1,\ldots,\nu_N}^\kappa}
\nc{\Phizw}{\Phi_{\vec w}({\vec z})}
\nc{\Pro}{{\mathbb P}}
\nc{\De}{D}
\nc{\us}{\underset}
\nc{\Ll}{\mc L}
\nc{\dR}{\on{dR}}
\nc{\T}{{\mc T}}
\nc{\Xn}{\overset{\circ}X{}^n} \nc{\Dn}{\overset{\circ}D{}^n}
\nc{\Dxn}{\overset{\circ}D{}^n_x} \nc{\varphitil}{\wt{\varphi}}
\nc{\lf}{{\mf l}}
\nc{\Wir}{\on{Vir}}
\nc{\bfgn}{{\bf g}_n}
\nc{\bfzn}{{\bf z}_n}
\def\N{{\mathbb N}}
\def\Z{{\mathbb Z}}
\def\C{{\mathbb C}}
\def\Aut{{\rm Aut}}
\def\wt{{\rm wt}}
\def\End{{\rm End}}
\def\ch{{\rm  ch}}
\def\1{{\bf 1}}
\def\l{{\lambda}}
\def\<{\langle}
\def\>{\rangle}
\def\Res{{\rm Res}}
\def\a{\alpha}
\def\b{\beta}
\def\Hom{{\rm Hom}}
\theoremstyle{remark}
\theoremstyle{definition}
\newcommand{\comment}[1]{}
\begin{document}
\title[Associative algebra twisted bundles over compact topological spaces] 
{Associative algebra twisted bundles over compact topological spaces}  
                                
\author{A. Zuevsky} 
\address{Institute of Mathematics \\ Czech Academy of Sciences\\ Zitna 25, Prague\\ Czech Republic}

\email{zuevsky@yahoo.com}

\begin{abstract}
For the associative algebra $A(\mathfrak g)$ of an infinite-dimensional Lie algebra $\mathfrak g$, 
we introduce twisted fiber bundles  
over arbitrary compact topological spaces.  
Fibers of such bundles are given by elements of algebraic completion of 
the space of formal series 
in complex parameters,  
sections are provided by rational functions with 
prescribed analytic properties. 
Homotopical invariance as well as 
covariance in terms of trivial bundles of twisted $A(\mathfrak g)$-bundles is proven. 
Further applications of the paper's results 
useful for studies of the  
 cohomology of 
  infinite-dimensional Lie algebras on smooth manifolds, $K$-theory, as well as for 
purposes of conformal field theory,  
 deformation theory, and the theory 
of foliations 
 are mentioned. 

\bigskip 
\end{abstract}

\keywords{Associative algebras,  fiber bundles, rational functions with prescribed properties}

\vskip12pt  

\maketitle

\section{Introduction}
\setcounter{equation}{0}
 It is natural to consider bundles of modules related to associative algebras.  
Playing important roles in clarification of the cohomology theory on smooth manifolds, 
they are also important for elliptic and Witten genua \cite{W, Wi}, 
the highest weight 
representations of Heisenberg and affine Kac-Moody algebras, and 
provide important examples for the construction of related associative algebras. 
 In this paper we introduce twisted fiber bundles of modules of associative algebras of 
infinite-dimensional Lie algebras twisted in the form of group of automorphisms torsors originating from 
local geometry. 
 Our original motivation for
this work is to understand continuous cohomology \cite{Bott, BS, Fei, Fuks, GKF, PT, Wag}
 of non-commutatieve structures over 
compact topological spaces. 
In particular \cite{BS},  one hopes to relate cohomology of infinite-dimensional Lie algebras-valued series
 considered 
on complex manifolds to fiber bundles on auxiliary topological spaces \cite{PT}.  
We also plan to study applications of results of this paper for K-theories. 
Let $\mathfrak g$ be an infinite-dimensional Lie algebra \cite{K}. 
Starting from algebraic completion $G_{\bf z}$ of the space of $\mathfrak g$-valued series 
in a few formal complex parameters, 
we introduce the category $\Oo_{A(\mathfrak g)}$ of associative algebra modules for the 
associative algebra $A(\mathfrak g)$ originating  
from $G_{\bf z}$ by means of factorization with respect to two natural multiplications \cite{Z}. 
Local parts of twisted bundles are constructed as principal bundles of 
products of ${\rm Aut}(\mathfrak g)$ modules 
and spaces of all sets of local parameters of a $X$-covering. 
 As in the untwisted case \cite{DLMZ}, this result is crucial in defining $A(\mathfrak g)$ 
$K$-groups and studying the cohomology their properties. 
\section{Prescribed rational functions}  
\label{rational}
In this section  the space of prescribed rational functions is defined as  rational  
functions with certain analytical and symmetric properties \cite{H1}.    
Such rational functions depend implicitly on an infinite number of non-commutative parameters.  
\subsection{Rational functions originating from matrix elements}
\label{functional}
%
Let us introduce the general notations used in this paper. 
We denote by boldface vectors of elements, e.g., ${\bf a}_n=(a_1, \ldots, a_n)$,  
 and the same for all types of objects used in the text.   
If $n$ is omitted then ${\bf a}$ denotes any choice of $n\ge 0$.  
We also express as $({\bf a}_j)_n$ the $j$-th component of ${\bf a}_n$.  
Let $I$ be set of positive integers, and $X_{\bm \alpha}=\left\{ X_\alpha, \alpha \in I \right\}$ 
 be an open covering of a compact topological space $X$ 
which gives a local trivialization of the $A(\mathfrak g)$ fiber bundle.    
 Let $\mathfrak g$, be an infinite-dimensional Lie algebra.  
Denote by $G$ a $\mathfrak g$-module. 
Denote by $G_{{\bf z}_n}$ be the graded (with respect to a grading operator $K_G$)   
algebraic completion of the space of formal series individually in each 
of complex formal parameters ${\bf z}_n$, and satisfying certain properties described below.   
 We denote ${\bf x}_n=({\bf g}_n, {\bf z}_n)$    
for ${\bf g}_n$ of the $n$-th power ${\bf G}_n =G^{\otimes n}$ of $\mathfrak g$-module $G$, and 
$G^*_{ {\bf z}_n }$ be the dual to $G_{{\bf z}_n}$ with respect  
to non-degenerate bilinear pairing $(. , .)$. 
For fixed $\theta \in G^*_{ {\bf z}_n }$,   
and varying ${\bf x}_n \in {\bf G}_{{\bf z}_n}$ we
 consider matrix elements $F( {\bf x}_n)$ of the form 
\begin{equation}
\label{toppa}
F({\bf x}_n) = \left( \theta, f ({\bf x}_n) \right)  \in \C((z)),   
\end{equation}
where $F({\bf x}_n)$  depends implicitly on ${\bf g}_n \in {\bf G}_n$.  
In this paper we consider 
 meromorphic functions 
 of several complex formal parameters 
 defined on a compact topological space 
 which are extendable to rational functions  
 on larger domains on $X$. We denote such extensions by $R(f({\bf z}_{n}))$.    
\begin{definition}
Denote by $F_{n}\mathbb C$ the  
configuration space of $n \ge 1$ ordered coordinates in $\mathbb C^{n}$, 
 $F_{n}\mathbb C=\{{\bf z}_n \in \mathbb C^{n}\;|\; z_{i} \ne z_{j}, i\ne j\}$. 
\end{definition}
In order to work with objects on $X$ 
for a set of ${\bf G}_n$-elements ${\bf g}_{n}$  
we consider converging 
rational 
functions $f({\bf x}_{n})\in G_{{\bf z}_n}$ of ${\bf z}_{n} \in F_{n}\mathbb C$. 
\begin{definition}
For an arbitrary fixed $\theta \in G^*_{\bf z}$,      
we call 
 a map linear in ${\bf g}_n$ and ${\bf z}_n$, 
\begin{equation} 
F: {\bf x}_n 
\mapsto   
\label{deff}
    R(  (\theta, f({\bf x}_n ) ), 
\end{equation}
  a rational function in ${\bf z}_n$   
with the only possible poles at 
$z_{i}=z_{j}$, $i\ne j$.  
Abusing notations, we denote 
\[
F({\bf x}_n )= R\left( (\theta, f({\bf x}_n)) \right). 
\] 
\end{definition}
\begin{definition}
We define left action of the permutation group $S_{n}$ on $F({\bf z}_n)$ 
by
\[
\nc{\bfzq}{{\bf z}_n}
\sigma(F)({\bf x}_n)=F\left({\bf g}_n, {\bf z}_{\sigma(i)} \right).  
\]
\end{definition}
\subsection{Conditions on $G_{\bf z}$}
\label{condg}
For $G_{\bf z}$  we assume \cite{H1} that is  
$G_{\bf z}= \coprod_{\l \in \mathbb{C}} G_{{\bf z}, \l}$,  
where
$G_{ {\bf z}, \l}=\{w\in G_{\bf z} | K_0 w=\l w\, \; \l=\wt(w)\}$,  
such that 
$G_{{\bf z}, \l}=0$ when the real part of $\alpha$ is sufficiently negative, 
 Moreover we require that
$\dim G_{{\bf z}, \l} < \infty$, i.e., it  is finite, and for fixed $\l$, $G_{{\bf z}, n+\l}=0$,  for all 
small enough integers $n$. 
In addition, assume that 
 $G_{\bf z}$  equipped with a map   
$\omega_G:  G_{\bf z}  \to   G[[{\bf z}, {\bf z}^{-1}]]$,   
 $g \mapsto \omega_g(z)\equiv \sum_{l\in \C} g_l \; z^l$.     
In addition to that, 
 for $g\in \mathfrak g$ and $g \in G$, $\omega_g(z) w$ contains only finitely many negative 
power terms, that is, $\omega_g(z)w\in G((z))$. 
We denote by $G'_{\bf z}=\coprod_{\l \in \mathbb{Z}} G_{\l}^{*}$ the dual to $G_{\bf z}$.  
Through matrix elements \eqref{toppa},  
 locality and associativity conditions 
 for $g_1$, $g_2 \in \mathfrak g$, $w \in G$, 
$\theta \in G'$, for $G_{{\bf z}_n}$ are assumed, i.e.,  
 the series   
$\left(\theta, \omega_{g_1}(z_1) \; \omega_{g_2}(z_2)w \right)$,  
 $\left(\theta, \omega_{g_2}(z_2) \; \omega_{g_1}(z_1) w \right)$,  
$\left( \theta, \omega_{ \omega_{g_1}(z_1-z_2)  g_2 } (z_2) w \right)$,   
are absolutely convergent
in the regions $|z_1|>|z_2|>0$, $|z_2|>|z_1|>0$,
$|z_2|>|z_1-z_2|>0$, 
respectively, to a common rational function 
in $z_1$ and $z_2$ with the only possible poles at $z_1=0=z_2$ and 
$z_1=z_2$.
\begin{definition} 
Let $\mathfrak G \subset \Aut(\mathfrak g)$ be a subgroup of  $\Aut(\mathfrak g)$.  
 We say that $\mathfrak G$ acts on $G_{\bf _z}$ as automorphisms if 
$g \; \omega_h(z) \; g^{-1}=\omega_{gh}(z)$,  
 on $G_{\bf z}$   
for all $g \in {\mathfrak G}$, $h\in \mathfrak g$.   
\end{definition}
\subsection{Conditions on rational functions} 
Let ${\bf z}_n \in F_{n}\C$.  
Denote by $T_G$ the translation operator \cite{H1}. 
We define now extra conditions on rational functions leading to the definition of restricted 
rational functions. 
\begin{definition}
Denote by $(T_G)_i$ the operator acting on the $i$-th entry.
 We then define the action of partial derivatives on an element $F({\bf x}_n)$  
\begin{eqnarray}
\label{cond1}
\partial_{z_i} F({\bf x}_n)  &=& F((T_G)_i \; {\bf x}_n),   
\nn
\sum\limits_{i \ge 1} \partial_{z_i}  F({\bf x}_n)  
&=&  T_{G} F({\bf x}_n),   
\end{eqnarray}
and call it $T_{G}$-derivative property. 
\end{definition}
\begin{definition} 
For   $z \in \C$,  let 
\begin{eqnarray}
\label{ldir1}
 e^{zT_G} F ({\bf x}_n)   
 = F({\bf g}_n, {\bf z}_n +z).  
\end{eqnarray}
 Let 
 ${\rm Ins}_i(A)$ denote the operator of multiplication by $A \in \C$ at the $i$-th position. Then we define   
\begin{equation}
\label{expansion-fn}
F\left({\bf g}_n, {\rm Ins}_i(z) \; {\bf z}_n \right)=   
F\left( {\rm Ins}_i (e^{zT_G}) \; {\bf x}_n\right),  
\end{equation}
are equal as power series expansions in $z$, in particular, 
 absolutely convergent
on the open disk $|z|<\min_{i\ne j}\{|z_{i}-z_{j}|\}$.  
\end{definition}
\begin{definition}
A rational function has $K_G$-property   
if for $z\in \C^{\times}$ satisfies 
$(z\;{\bf  z}_n) \in F_{n}\C$,  
\begin{eqnarray}
\label{loconj}
z^{K_G } F ({\bf x}_n) =  
 F \left(z^{K_G} {\bf g}_n, 
 z\; {\bf z}_n \right). 
\end{eqnarray}
\end{definition}
\subsection{Rational functions with prescribed analytical behavior}
In this subsection we give the definition of rational functions with prescribed analytical behavior
on a domain of $X$.     
We denote by $P_{k}: G \to G_{(k)}$, $k \in \C$,      
the projection of $G$ on $G_{(k)}$.
For each element $g_i \in G$, and $x_i=(g_i, z)$, $z\in \C$ let us associate a formal series  
$\omega_{g_i}(z)=  
\sum\limits_{k \in \C }  g_{ik} \; z^{k}$, $i \in \Z$.  
  Following \cite{H1}, we formulate 
\begin{definition}
\label{defcomp}
We assume that there exist positive integers $\beta(g_{l', i}, g_{l", j})$ 
depending only on $g_{l', i}$, $g_{l'', j} \in G$ for 
$i$, $j=1, \dots, (l+k)n $, $k \ge 0$, $i\ne j$, $ 1 \le l', l'' \le n$.  
 Let   
${\bf l}_n$ be a partition of $(l+ k)n     
=\sum\limits_{i \ge 1} l_i$, and $k_i=l_{1}+\cdots +l_{i-1}$. 
For $\zeta_i \in \C$,  
define 
$h_i  
=F 
({\bf W}_{ { \bf g}_{ k_i+{\bf l}_i  }} ( 
 {\bf z}_{k_i + {\bf l}_i  }- \zeta_i ))$, 
for $i=1, \dots, n$.
We then call a rational function $F$ satisfying properties \eqref{cond1}--\eqref{loconj}, 
a rational function with prescribed analytical behavior, if 
under the following conditions on domains, 
$|z_{k_i+p} -\zeta_{i}| 
+ |z_{k_j+q}-\zeta_{j}|< |\zeta_{i} -\zeta_{j}|$,   
for $i$, $j=1, \dots, k$, $i\ne j$, and for $p=1, 
\dots$,  $l_i$, $q=1$, $\dots$, $l_j$, 
the function   
%
 $\sum\limits_{ {\bf r}_n \in \Z^n}  
F( {\bf P_{r_{i}}  h_i}; (\zeta)_{l})$,     
is absolutely convergent to an analytical extension 
in ${\bf z}_{l+k}$, independently of complex parameters $(\zeta)_{l}$,
with the only possible poles on the diagonal of ${\bf z}_{l+k}$   
of order less than or equal to $\beta(g_{l',i}, g_{l'', j})$.   
 In addition to that, for ${\bf g}_{l+k}\in G$,  the series 
$\sum_{q\in \C}$  
$F( {\bf W(g_k}$, ${\bf P}_q ( {\bf W(g}_{l+k}, {\bf z}_k), {\bf z}_{ k + {\bf l} }))$,    
is absolutely convergent when $z_{i}\ne z_{j}$, $i\ne j$
$|z_{i}|>|z_{s}|>0$, for $i=1, \dots, k$ and 
$s=k+1, \dots, l+k$ and the sum can be analytically extended to a
rational function 
in ${\bf z}_{l+k}$ with the only possible poles at 
$z_{i}=z_{j}$ of orders less than or equal to 
$\beta(g_{l', i}, g_{l'', j})$. 
\end{definition}
For $m \in \N$ and $1\le p \le m-1$, 
 let $J_{m; p}$ be the set of elements of 
$S_{m}$ which preserve the order of the first $p$ numbers and the order of the last 
$m-p$ numbers, that is,
\[
J_{m, p}=\{\sigma\in S_{m}\;|\;\sigma(1)<\cdots <\sigma(p),\;
\sigma(p+1)<\cdots <\sigma(m)\}.
\]
Let $J_{m; p}^{-1}=\{\sigma\;|\; \sigma\in J_{m; p}\}$.  
In addition to that, for some rational functions require the property: 
\begin{equation}
\label{shushu}
\sum_{\sigma\in J_{n; p}^{-1}}(-1)^{|\sigma|} 
\sigma( 
F ({\bf g}_{\sigma(i)}, {\bf z}_n) 
)=0. 
\end{equation}
Then, we have 
\begin{definition}
\label{poyma}
We define the space $\Theta \left(n, k, G_{{\bf z}_n}, U\right)$ of 
  matrix elements $F({\bf x}_n)$  of  
   $n$ formal complex parameters as the space of restricted     
 rational functions 
 with prescribed analytical behavior 
on a $F_{n}\C$-domain $U \subset X$,  and   
satisfying 
 $T_G$- and $K_G$-properties \eqref{cond1}--\eqref{loconj}, 
 definition \eqref{defcomp},  
  and \eqref{shushu}.  
\end{definition}
%
\section{Associative algebra $A(\mathfrak g)$ of prescribed rational functions}
In this section we define a twisted bundle corresponding to the associative algebra $A(\mathfrak g)$, 
 and describe their properties.  
\subsection{Associative algebra $A(\mathfrak g)$ out of $\mathfrak g$} 
\label{s6.2}
In this subsection we recall \cite{Z, DLM2} a way how to derive an associative 
algebra $A(\mathfrak g)$ out of an infinite-dimensional Lie algebra $\mathfrak g$. 
\begin{definition}
  For any homogeneous vectors
$h$, $\widetilde{h} \in G$,   
one defines
the multiplications   
\begin{align*}
h *_\kappa \widetilde{h}= \Res_{z} \left((1+z)^{ \wt(h) }    
 \sum\limits_{l \in \C}h_n z^{l-\kappa} \right).\widetilde {h},
\end{align*}
for $\kappa=1$, $2$, 
and extend it bilinearly it to $G \times G$.   
\end{definition}
Here, as usual, $\Res_z$ denotes the coefficient in front of $z^{-1}$. 
\begin{definition}
For $h$, $\widetilde{h} \in G$,   
 define $A(\mathfrak g)=G_{\bf z}/ (  {\rm span}  ( h*_2 \widetilde{h} ) )^\theta$.  
\end{definition}
For $\theta=0$ we get back to $G_{\bf z}$ with associativity property described in subsection \ref{condg}
expressed via matrix elements, while for 
 and for $\theta=1$ we obtain 
an associative algebra associated to $\mathfrak g$ with ordinary associativity. 
The following theorem is due to \cite[\S 2]{Z} (also see
\cite{DLM2}).
\begin{theorem}\label{P3.10}
 The bilinear operation $*_1$ makes $A(\mathfrak g)$ into an associative
algebra with the linear map 
$\phi:  g \mapsto \exp \left( -z^{2} \partial_z \right) (-1)^{-z \partial_z} g$, 
inducing an anti-involution $\nu$ on $A(\mathfrak g)$.
\end{theorem}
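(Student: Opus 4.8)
The plan is to reduce the theorem to a bookkeeping statement about residues and then verify associativity and the anti-involution property by explicit manipulation of the operators $(1+z)^{\wt(h)}$ and $\exp(-z^2\partial_z)(-1)^{-z\partial_z}$. First I would fix homogeneous vectors $h_1,h_2,h_3\in G$ and write out both $(h_1*_1 h_2)*_1 h_3$ and $h_1*_1(h_2*_1 h_3)$ as iterated residues in two formal variables, say $z$ and $w$. The key technical identity is the standard ``change of residue coordinates'' lemma: an expression of the form $\Res_z\Res_w$ of a product involving $(1+z)^a$, $(1+w)^b$ and the $\mathfrak g$-action can be rewritten, after the substitution $w\mapsto z+w$ (i.e.\ expanding $(1+z+w)$ in the appropriate domain), in a manner that exhibits the associator as the residue of a total $z$- or $w$-derivative, hence zero. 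Here the hypotheses from Section~\ref{condg} are exactly what is needed: the locality and associativity conditions guarantee that the relevant series converge to a common rational function with poles only at $z_i=0$ and $z_i=z_j$, so that the formal residue manipulations are legitimate and the two expansions of that rational function agree. This is the same argument as in \cite[\S2]{Z} and \cite{DLM2}, adapted to the present prescribed-rational-function setting, so I would cite those sources for the core computation and only indicate the modifications.

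Next I would check that $*_1$ descends to the quotient $A(\mathfrak g)=G_{\bf z}/(\mathrm{span}(h*_2\widetilde h))^\theta$, i.e.\ that $\mathrm{span}(h*_2\widetilde h)$ is a two-sided ideal for $*_1$. This is again a residue identity: one shows that $(h*_2\widetilde h)*_1 u$ and $u*_1(h*_2\widetilde h)$ lie in the span of elements of the form $h'*_2\widetilde{h}'$, using the relation between the two multiplications $*_1$ and $*_2$ coming from the two ways of expanding $(1+z)$ versus $z$ near $0$ versus $\infty$. The identity element is the class of the vacuum-like element (the generator $K_0$-weight-zero vector), and one checks $\vac*_1 h=h=h*_1\vac$ directly from the residue formula, using that $(1+z)^{\wt(\vac)}=1$ and that $\omega_{\vac}(z)=\mathrm{id}$.

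For the anti-involution, I would verify three things about $\phi\colon g\mapsto \exp(-z^2\partial_z)(-1)^{-z\partial_z}g$: (i) $\phi$ is well-defined on $G_{\bf z}$, which follows because $\exp(-z^2\partial_z)$ is a well-defined change of the formal coordinate $z\mapsto z/(1+z)$ and $(-1)^{-z\partial_z}$ implements $z\mapsto -z$, both of which preserve the grading-and-convergence conditions on $G_{\bf z}$; (ii) $\phi$ preserves the ideal $(\mathrm{span}(h*_2\widetilde h))^\theta$, so it induces a map $\nu$ on $A(\mathfrak g)$; and (iii) $\nu(h_1*_1 h_2)=\nu(h_2)*_1\nu(h_1)$ and $\nu^2=\mathrm{id}$. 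Part (iii) is the heart of the matter: one inserts the definition of $\phi$ into both sides, and the coordinate change $z\mapsto z/(1+z)$ together with $z\mapsto -z$ is exactly what swaps the two factors in the residue formula for $*_1$; the composition of the two substitutions is an involution, which gives $\nu^2=\mathrm{id}$.

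\textbf{Main obstacle.}
The hard part will be (iii), specifically showing that the operator $\phi$ intertwines the two orderings in $*_1$ once one passes to the quotient. The subtlety is that $\phi$ is defined by a specific composite of the two formal substitutions $z\mapsto z/(1+z)$ and $z\mapsto -z$, and one must track how these act simultaneously on the weight factor $(1+z)^{\wt(h)}$, on the mode expansion $\sum_l h_l z^{l-1}$, and on the variable of the outer residue; keeping the expansion domains consistent through these substitutions — so that the formal identities are actually identities of the common rational function supplied by the conditions of Section~\ref{condg} — is where the care is required. I expect that, modulo this bookkeeping, everything reduces cleanly to the computations already recorded in \cite[\S2]{Z} and \cite{DLM2}, which I would invoke rather than reproduce.
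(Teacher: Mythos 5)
The paper gives no proof of Theorem \ref{P3.10}: it simply attributes the result to \cite[\S 2]{Z} (and \cite{DLM2}) and moves on. Your outline reconstructs the standard Zhu-algebra argument (associativity of $*_1$ via iterated-residue identities, the $*_2$-span being a two-sided ideal, and the anti-involution induced by $\exp(-z^{2}\partial_z)(-1)^{-z\partial_z}$, i.e.\ the coordinate changes $z\mapsto z/(1+z)$ and $z\mapsto -z$) and then defers the core computations to the same two references, so it is consistent with --- and considerably more detailed than --- what the paper itself provides.
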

In what follows, let us denote by $W \subset G_{\bf z}$ 
a subspace which is an $A(\mathfrak g)$-module.  
For homogeneous $g\in G$ we set $o(g)=a_{\wt(g)-1}$ and extend 
linearly to all $g\in G$.  
\begin{definition}
 We now define the space of lowest weight vectors of $G$:  
\[
L(W)=\{g, \in G,  w \in W|g_{ \wt(h) + m} w=0, h\in W, m \geq 0\}.
\]
\end{definition}
\begin{remark}
  For $W=\bigoplus_{\l\in\C} W_{\l}$,    
 $L(W)=\bigoplus_{\l\in\C} L(W)_{\l}$ is naturally 
graded, and each homogeneous subspace $L(W)_{\l}= L(W)\cap W_{\l}$ is finite dimensional. 
\end{remark} 
It is easy to see the following 
\begin{lemma}\label{l6.a} 
For $W$, $\widetilde{W}$ be two $A(\mathfrak g)$-modules, and for $\varphi:  W\to \widetilde{W}$ 
an $A(\mathfrak g)$-module homomorphism, one has $\varphi(L(W)) \subset L(\widetilde{W})$.  
In particular, if $\varphi$ is an isomorphism then 
$\varphi(L(W))=L(\widetilde{W})$.
\end{lemma}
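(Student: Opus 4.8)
The statement is the standard fact that a module homomorphism preserves lowest weight vectors, so the plan is to chase the defining condition through the homomorphism. Recall that $L(W)$ consists of those $w\in W$ such that $g_{\wt(h)+m}\,w=0$ for all $h$ in the module and all $m\ge 0$, where $g_k$ denotes the modes of the series $\omega_g(z)$ and $o(g)=a_{\wt(g)-1}$ is the zero-mode operator realizing the $A(\mathfrak g)$-action. The first step is to make precise that the $A(\mathfrak g)$-module structure on $W$ is implemented by these very modes: the operators $g_{\wt(h)+m}$ for $m\ge 0$ are (up to the identifications of \thmref{P3.10}) exactly the operators that act as zero in passing to the $A(\mathfrak g)$-quotient, so $L(W)$ is intrinsically the set of vectors annihilated by the "positive part" of the action. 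Thus I want to rewrite the condition $w\in L(W)$ purely in terms of the $A(\mathfrak g)$-action, i.e.\ as $X\cdot w = 0$ for $X$ ranging over a distinguished subset (ideal) of operators coming from $A(\mathfrak g)$ acting on $W$.

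Once that reformulation is in hand, the proof is immediate. Let $\varphi:W\to\widetilde W$ be an $A(\mathfrak g)$-module homomorphism and let $w\in L(W)$. For any $h\in W$ and $m\ge0$, the operator $g_{\wt(h)+m}$ intertwines with $\varphi$ because $\varphi$ is a module map: $\varphi(g_{\wt(h)+m}\,w) = g_{\wt(h)+m}\,\varphi(w)$. Since $g_{\wt(h)+m}\,w=0$ by hypothesis, the left side is $\varphi(0)=0$, hence $g_{\wt(h)+m}\,\varphi(w)=0$. Here one must be slightly careful that the ``test vectors'' $h$ range over all of $\widetilde W$ (not just over $\varphi(W)$): but because $\varphi$ is a module homomorphism the annihilation by the distinguished family of operators is a property of the vector $\varphi(w)$ alone relative to the ambient module structure of $\widetilde W$, and the family of operators in question is the same family (indexed by weights, independent of the module). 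Therefore $\varphi(w)\in L(\widetilde W)$, giving $\varphi(L(W))\subset L(\widetilde W)$.

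For the ``in particular'' clause, suppose $\varphi$ is an isomorphism of $A(\mathfrak g)$-modules. Applying the inclusion just proved to $\varphi$ gives $\varphi(L(W))\subset L(\widetilde W)$, and applying it to $\varphi^{-1}:\widetilde W\to W$ gives $\varphi^{-1}(L(\widetilde W))\subset L(W)$, hence $L(\widetilde W)=\varphi(\varphi^{-1}(L(\widetilde W)))\subset\varphi(L(W))$. Combining the two inclusions yields $\varphi(L(W))=L(\widetilde W)$. One may also note, using the preceding remark that $L(W)=\bigoplus_\l L(W)_\l$ with each graded piece finite dimensional, that $\varphi$ restricts to a graded isomorphism $L(W)\xrightarrow{\sim}L(\widetilde W)$ provided $\varphi$ respects the grading, but this refinement is not needed for the statement.

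\textbf{Main obstacle.} The only genuine subtlety — and the step I would write out most carefully — is the first one: pinning down that the defining condition for $L(W)$ can be expressed through operators that commute with $A(\mathfrak g)$-module homomorphisms, i.e.\ that the modes $g_{\wt(h)+m}$ with $m\ge0$ really are part of (or determined by) the $A(\mathfrak g)$-action rather than extra unstructured data. Everything after that is a one-line intertwining argument. If instead one takes the convention that $L(W)$ is defined directly in terms of the $A(\mathfrak g)$-action (the zero-mode algebra), then even this obstacle evaporates and the lemma is purely formal.
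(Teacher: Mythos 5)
Your proof is correct, and the paper itself offers no argument at all for this lemma (it is introduced only with ``It is easy to see the following''), so your intertwining argument $\varphi(g_{\wt(h)+m}\,w)=g_{\wt(h)+m}\,\varphi(w)$ is precisely the standard one the paper is implicitly relying on (it is the same proof as for the analogous lemma in \cite{DLMZ}). You also correctly isolate the one genuine subtlety --- that the annihilating operators defining $L(W)$ must be indexed by the algebra (so that they are intertwined by an $A(\mathfrak g)$-module map and are the same family for $W$ and $\widetilde W$) rather than by the module, which is how the paper's somewhat garbled definition of $L(W)$ should be read.
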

\subsection{Category $\Oo_{A(\mathfrak g)}$ of $A(\mathfrak g)$-modules}
Let $W_z$ be an $A(\mathfrak g)$-module and we denote the dual space of $W$ 
with respect to the pairing $(. , .)$ by $W'$. 
 The following lemma is obvious \cite{DLMZ}:  
\begin{lemma}
\label{l6.1} $W'$ is an $A(\mathfrak g)$-module such that 
 $(a \; m',m)=(m',\nu(a) \; m)$,  
 for $a\in A(\mathfrak g)$, $m'\in W'$, and $m\in W$. 
\end{lemma}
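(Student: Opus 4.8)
The claim is that the dual space $W'$ of an $A(\mathfrak g)$-module $W$ carries a natural $A(\mathfrak g)$-module structure for which the defining adjunction $(a\,m',m)=(m',\nu(a)\,m)$ holds, where $\nu$ is the anti-involution of $A(\mathfrak g)$ induced by $\phi$ in Theorem~\ref{P3.10}. The plan is to \emph{define} the action by that very formula and then check it is a well-defined left module action. First I would fix $a\in A(\mathfrak g)$ and $m'\in W'$ and set $(a\,m')(m):=m'(\nu(a)\,m)$ for all $m\in W$; since $\nu(a)\,m$ is a finite sum of homogeneous vectors and $m'$ is a linear functional on $W$ vanishing on all but finitely many graded pieces, $a\,m'$ is again an element of $W'=\coprod_{\l}W_{\l}^{*}$ (here the grading/finiteness hypotheses on $W$ — each $W_{\l}$ finite dimensional and $W_{n+\l}=0$ for small $n$ — guarantee that $\nu$, being built from $\exp(-z^2\partial_z)$ and $(-1)^{-z\partial_z}$, preserves the graded completion and that duality behaves well). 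Linearity of the map $m'\mapsto a\,m'$ and of $a\mapsto a\,m'$ is immediate from linearity of $\nu$ and of the pairing.

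The one substantive point is the module axiom $(ab)\,m'=a\,(b\,m')$. Evaluating the left side on $m\in W$ gives $m'(\nu(ab)\,m)$; evaluating the right side gives $(b\,m')(\nu(a)\,m)=m'(\nu(b)\,\nu(a)\,m)$. These agree precisely because $\nu$ is an \emph{anti}-involution, i.e. $\nu(ab)=\nu(b)\nu(a)$, which is exactly the content of Theorem~\ref{P3.10}. One should also note $\vac\cdot m'=m'$ follows from $\nu$ fixing the identity (an anti-involution sends the unit to the unit), so the action is unital. Finally the adjunction relation $(a\,m',m)=(m',\nu(a)\,m)$ is then literally the definition, so there is nothing further to verify there.

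The only place where care is genuinely needed — and what I would flag as the main obstacle — is the \textbf{well-definedness at the level of $A(\mathfrak g)$ rather than $G_{\bf z}$}: recall $A(\mathfrak g)=G_{\bf z}/(\mathrm{span}(h*_2\widetilde h))^{\theta}$, so one must confirm that the formula descends to the quotient, i.e. that if $a$ lies in the ideal used to form $A(\mathfrak g)$ then $\nu(a)$ acts as $0$ on the $A(\mathfrak g)$-module $W$. This is immediate once one uses that $\nu$ is well-defined \emph{on} $A(\mathfrak g)$ (Theorem~\ref{P3.10} gives $\nu$ as a map $A(\mathfrak g)\to A(\mathfrak g)$, so it already respects the ideal) together with the fact that $W$ is by hypothesis an $A(\mathfrak g)$-module, so the $G_{\bf z}$-action on $W$ factors through $A(\mathfrak g)$; hence $\nu(a)\,m$ depends only on the class of $a$ in $A(\mathfrak g)$. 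I would write this out as the first paragraph of the proof, then dispatch linearity, the anti-homomorphism computation, and unitality in a couple of lines each, exactly as in the untwisted case treated in \cite{DLMZ}.
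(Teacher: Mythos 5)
Your proof is correct and is precisely the standard argument the paper relies on: the paper offers no written proof at all, merely declaring the lemma ``obvious'' with a citation to \cite{DLMZ}, and the intended justification is exactly your route of defining $(a\,m')(m):=m'(\nu(a)\,m)$ and using the anti-involution property $\nu(ab)=\nu(b)\nu(a)$ from Theorem~\ref{P3.10} to verify the module axiom. Your added care about descent to the quotient $A(\mathfrak g)=G_{\bf z}/(\mathrm{span}(h*_2\widetilde h))^{\theta}$ and about $a\,m'$ landing in the graded dual is a welcome filling-in of details the paper suppresses, not a deviation from its approach.
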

\begin{definition}
A pairing $(.,. )$ defined on an $A(\mathfrak g)$-module $W_z$ is called 
invariant if $(a \; w_1, w_2) = (w_1,\nu(a) \;w_2)$ for $w_i\in W_z$ and 
$a\in A(\mathfrak g)$. 
\end{definition}
 We also need to define the category $\Oo_{A(\mathfrak g)}$ of $A(\mathfrak g)$-modules. 
\begin{definition}
An $A(\mathfrak g)$-module $W$ is in $\Oo_{A(\mathfrak g)}$ 
if there exist ${\bm \l}_s \in\C^n$,  
such that 
$W=\bigoplus_{i=1, \atop n\geq 0}^s  W_{\l_i+n}$,  
 is a direct sum of finite dimensional $A(\mathfrak g)$-modules and 
$\Hom_{A(\mathfrak g)}(W_\l, W_{\mu})=0$,  
 if  $\mu \ne \l$. 
\end{definition}
\begin{theorem}\label{P3.1}
 Let $W_{0} \ne 0$. Then the linear map 
$o: W_{\bf z} \rightarrow \End (L(W_{\bf z})),  \;g \mapsto o(g)|L(W_{\bf z})$,     
induces a homomorphism from $W_z$ to $\End (L(W_{\bf z}))$, and 
 $L(W_{\bf z})$ is a left $A(\mathfrak g)$-module. 
For all $\l\in \C$,  
 $L(W_{\bf z})_{\l}$ is an finite-dimensional $A(\mathfrak g)$-module. 
$\Oo_{A(\mathfrak g)}$ is invariant with respect to definition of $L$.  
\end{theorem}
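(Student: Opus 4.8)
The plan is to verify the three assertions of Theorem~\ref{P3.1} in sequence, using the associative algebra structure from Theorem~\ref{P3.10} together with Lemma~\ref{l6.a}. First I would show that $o$ is well-defined and a homomorphism. The key is that $L(W_{\bf z})$ is, by its defining condition $g_{\wt(h)+m}w=0$ for $m\ge 0$, precisely the subspace annihilated by all ``positive weight'' modes, so that for homogeneous $g$ the operator $o(g)=g_{\wt(g)-1}$ preserves $L(W_{\bf z})$: applying a further mode $h_{\wt(h)+m}$ to $o(g)w$ produces, after rearranging through the locality/associativity relations of \S\ref{condg}, a combination of terms each of which contains a positive mode hitting $w$, hence vanishes. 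That $o(g_1 *_1 g_2)=o(g_1)o(g_2)$ on $L(W_{\bf z})$ is the content of the Res$_z$ computation underlying Theorem~\ref{P3.10}, specialized to the lowest-weight component: the $(1+z)^{\wt(g_1)}$ factor in the definition of $*_1$ is exactly what is needed so that the zero-mode of the product matches the product of zero-modes when all higher modes act as zero. So $L(W_{\bf z})$ becomes a left $A(\mathfrak g)$-module.

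Second, the grading. By the Remark following the definition of $L(W)$, for $W=\bigoplus_{\l\in\C}W_\l$ we have $L(W_{\bf z})=\bigoplus_{\l\in\C}L(W_{\bf z})_\l$ with $L(W_{\bf z})_\l=L(W_{\bf z})\cap W_\l$ finite-dimensional (this uses $\dim G_{{\bf z},\l}<\infty$ from \S\ref{condg}). Since $o(g)$ for homogeneous $g$ is the mode $g_{\wt(g)-1}$, it has weight $0$ as an operator (it sends $W_\mu$ to $W_\mu$), so each $L(W_{\bf z})_\l$ is preserved by the whole of $A(\mathfrak g)$ acting through $o$. Hence each $L(W_{\bf z})_\l$ is a finite-dimensional $A(\mathfrak g)$-module, giving the second assertion.

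Third, invariance of $\Oo_{A(\mathfrak g)}$ under $L$. Suppose $W\in\Oo_{A(\mathfrak g)}$, so $W=\bigoplus_{i=1,n\ge 0}^{s}W_{\l_i+n}$ is a direct sum of finite-dimensional modules with $\Hom_{A(\mathfrak g)}(W_\l,W_\mu)=0$ for $\mu\ne\l$. I would then check that $L(W_{\bf z})=\bigoplus_{i=1,n\ge 0}^{s}L(W_{\bf z})_{\l_i+n}$ inherits exactly this shape: the decomposition is immediate from the previous paragraph, each summand $L(W_{\bf z})_{\l_i+n}=L(W_{\bf z})\cap W_{\l_i+n}$ is finite-dimensional, and the $\Hom$-orthogonality passes to the submodules because any $A(\mathfrak g)$-map $L(W_{\bf z})_\l\to L(W_{\bf z})_\mu$ would, in the presence of the module structure, force a nonzero $A(\mathfrak g)$-map between the ambient $W_\l$ and $W_\mu$ (or can be seen directly to vanish by the same weight argument). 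Thus $L(W_{\bf z})\in\Oo_{A(\mathfrak g)}$.

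I expect the main obstacle to be the first step: showing cleanly that $o(g)$ preserves $L(W_{\bf z})$ and that $o$ intertwines $*_1$ with operator composition. Both require pushing the abstract locality/associativity of the matrix elements $\bigl(\theta,\omega_{g_1}(z_1)\omega_{g_2}(z_2)w\bigr)$ in \S\ref{condg} down to identities on modes, and the bookkeeping with the $(1+z)^{\wt(g)}$ twist and the $\Res_z$ in the definition of $*_1$ is where sign and weight-shift errors are easy to make; the grading and the $\Oo_{A(\mathfrak g)}$-invariance are then essentially formal consequences. If a subtlety arises it will be in verifying that the higher modes really do annihilate after commuting past $o(g)$ rather than merely producing lower-order obstructions, which is why the finite-dimensionality and lower-truncation hypotheses on $G_{{\bf z},\l}$ are needed.
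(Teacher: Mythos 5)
The paper contains no proof of Theorem~\ref{P3.1}: it is stated and implicitly imported from \cite{Z} and \cite{DLM2} (it is the analogue of Zhu's theorem that the lowest-weight space of a module is a module over the Zhu algebra). So there is no in-paper argument to compare yours against; I can only assess your sketch on its own terms, and on those terms your overall plan is the standard one --- the second and third assertions are indeed formal consequences of the first, exactly as you say.

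There is, however, one genuine gap in your first step. You argue (in outline) that $o(g)$ preserves $L(W_{\bf z})$ and that $o(g_1 *_1 g_2)=o(g_1)o(g_2)$, but $A(\mathfrak g)$ is defined as the quotient $G_{\bf z}/\,{\rm span}(h *_2 \widetilde h)$, so for $o$ to induce an action of $A(\mathfrak g)$ --- rather than merely of the non-associative algebra $(G_{\bf z},*_1)$ --- you must also show that $o(h *_2 \widetilde h)$ annihilates $L(W_{\bf z})$. This is the crux of the whole construction: the residue $\Res_z\bigl((1+z)^{\wt(h)}\sum_{l} h_l\, z^{l-2}\bigr)\widetilde h$, when its image under $o$ is evaluated on a lowest-weight vector, contributes only modes of the form $h_{\wt(h)+m}$ with $m\ge 0$, which vanish on $L(W_{\bf z})$ by the defining condition of $L$; this same vanishing is what makes $*_1$ associative and the map well defined on the quotient. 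Your sketch never addresses it, and without it the claimed left $A(\mathfrak g)$-module structure on $L(W_{\bf z})$ is not well defined. Everything else --- that $o(g)=g_{\wt(g)-1}$ has weight zero and hence preserves each $L(W_{\bf z})_\l$, that these are finite-dimensional by the Remark following the definition of $L(W)$, and that the $\Hom$-orthogonality in the definition of $\Oo_{A(\mathfrak g)}$ passes to the submodules --- is fine modulo that missing step.
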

Note that for $\l\ne \mu$, 
$\Hom_{A(\mathfrak g)}(L(W_{\bf z})_\l, L(W_{\bf z})_{\mu})=0$.  
Thus $L(W_{\bf z})$ is an element
of $\Oo_{A(\mathfrak g)}$.
\section{Twisted $A(\mathfrak g)$-bundles}
As it was shown in \cite{DLMZ},  
 it turns out that we can introduce corresponding bundles for a large class of associative algebras. 
In this section we define the main objects of this paper,    
 associative algebra twisted $A(\mathfrak g)$-bundles.     
\subsection{Torsors and twists under groups of automorphisms}
We now explain how to collect elements of the space $\Theta\left(n, k, W_{ {\bf z}, \l }, X_{\a} \right)$ of 
prescribed rational functions 
into sections of a twisted $A(\mathfrak g)$-bundle on $X$.  
%
Let $\mathcal H$ be a subgroup of the group ${\bf Aut}_{\bf z}\; {\bm \Oo}_{X}$  
of independent formal parameters $\bf z$ automorphisms on $X$. 
 We recall here the notion of a torsor with respect to a group. 
\begin{definition}
\label{torsor}
Let $\mathfrak H$ be a group, and $\mathcal X$ a non-empty set. 
Then $\mathcal X$ is called a $\mathfrak H$-torsor 
if it is equipped with a simply transitive right action of $\mathfrak H$,
i.e., given $\xi$, $\widetilde \xi \in \mathcal X$, there exists a unique $h \in \mathfrak H$ such that 
$\xi \cdot h = \widetilde \xi$, 
where for $h$, $\widetilde{h} \in \mathfrak H$ the right action is given by 
$\xi \cdot (h \cdot \widetilde{h}) = (\xi \cdot  h) \cdot \widetilde{h}$. 
The choice of any $\xi \in \mathcal X$ allows us to identify $\mathcal X$ with $\mathfrak H$ by sending 
 $\xi  \cdot h$ to $h$.
\end{definition}
Using similar results for $W_z$ of \cite{BZF}, one shows that certain subspaces $W_{z} \subset G_{\bf z}$ form   
$\mathcal H$-modules. 
Applying the definition of a group twist to the group $\mathcal H$ and 
 its module $W_{\bf z}$ we obtain 
\begin{definition}
Given a $\mathcal H$-module $W_{\bf z}$
 and a $\mathcal H$-torsor $X_{\a}$,   
one defines the $X_{\a}$-twist of $W_{\bf z}$ as the set 
\[
\E_{X_\a} = W_{\bf z} \; {{}_\times \atop      {}^{  \mathcal H      }     }X_{\a}   
=  W_{\bf z}  \times  X_\a/  \left\{ (w, a \cdot \xi) \sim  (aw, \xi) \right\},
\]   
for $\xi \in X_\a$, $a \in \mathcal H$, and $w \in W_{\bf z}$.   
\end{definition}
Now we wish to attach to any $X_\a$ a twist $\E_{X_\a}$ 
of $W_{\bf z}$.   
 We have an isomorphism
$i_{{\bf z}, X_\a }: W_{\bf z}  \; \widetilde{\to} \;  \E_{X_\a}$.  
The system of isomorphisms $i_{\bf z,  X_\a}$ should satisfy certain compatibility conditions.  
 Namely,  
an automorphism 
$(i^{-1}_{{\bf z}, X_\a} \circ i_{{\bf z}, X_\b})$  
 of $W_{\bf z}$  should  
define a representation on $W_{\bf z}$ of the group $\mathcal H$. 
  Then $\E_{X_\a}$ is canonically identified with the twist of $W_{\bf z}$ by the 
 $\mathcal H$-torsor of $X_\a$.    
 The elements of $\Theta\left(n, k, W_{ {\bf z} }, X_\a\right)$    
give rise to a collection of sections $F({\bf x})$.   
 The construction of local parts of a twisted $A(\mathfrak g)$-bundle
 is grounded on the notion of a principal bundle for the group 
$\mathcal H$    
 naturally existing on $X$.   
Let ${\it Aut}_{X_\a}$ be the space of all sets of local parameters on $X_\a$.   
Next we have (cf. \cite{BZF}) 
\begin{lemma}
The group $\mathcal H$
 acts naturally on ${\it Aut}_{X_\a}$ which is   
  a $\mathcal H$-torsor.  
\end{lemma}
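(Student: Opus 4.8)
The statement to prove is that the group $\mathcal H$ acts naturally on the space ${\it Aut}_{X_\a}$ of all sets of local parameters on $X_\a$, and that with respect to this action ${\it Aut}_{X_\a}$ is an $\mathcal H$-torsor in the sense of Definition~\ref{torsor}. The plan is to follow the standard local-geometry argument (as in \cite{BZF}): first exhibit the action, then verify that it is simply transitive by a local change-of-coordinates computation, the nontriviality coming from the fact that ${\it Aut}_{X_\a}$ is nonempty precisely because $X_\a$ carries at least one set of local parameters by construction of the covering $X_{\bm\alpha}$.

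\textbf{Step 1: the action.} An element of ${\it Aut}_{X_\a}$ is, by definition, a system ${\bf z}=(z_1,\dots,z_n)$ of independent formal local parameters on $X_\a$; an element $h\in\mathcal H\subset {\bf Aut}_{\bf z}\,{\bm\Oo}_X$ is a formal-coordinate automorphism in the parameters ${\bf z}$. I would define the right action by precomposition (substitution of formal series): ${\bf z}\cdot h := h^{-1}({\bf z})$, i.e. replace each $z_i$ by the corresponding component of the reparametrization determined by $h$, taking the inverse so that the action is a right action, $({\bf z}\cdot h)\cdot\widetilde h={\bf z}\cdot(h\widetilde h)$. One checks this lands back in ${\it Aut}_{X_\a}$: since $h$ is an automorphism of ${\bm\Oo}_X$ fixing no structure beyond the topology, the image system is again a valid set of local parameters (the Jacobian of the substitution is invertible as a formal power series). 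The associativity of the action is immediate from associativity of composition of formal automorphisms, matching the right-action axiom $\xi\cdot(h\cdot\widetilde h)=(\xi\cdot h)\cdot\widetilde h$ of Definition~\ref{torsor}.

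\textbf{Step 2: simple transitivity.} Given two sets of local parameters ${\bf z},\widetilde{\bf z}\in{\it Aut}_{X_\a}$, I would show there is a unique $h\in\mathcal H$ with ${\bf z}\cdot h=\widetilde{\bf z}$. Existence: each $\widetilde z_i$ is a formal series in the $z_j$'s vanishing appropriately, and the collection $\widetilde{\bf z}=({\widetilde z}_1,\dots,\widetilde z_n)$ as functions of ${\bf z}$ defines a formal automorphism $h$; one must check $h\in\mathcal H$, which holds because $\mathcal H$ was taken to be \emph{the} group of independent-parameter automorphisms on $X$, so any such coordinate change lies in it. Uniqueness: if ${\bf z}\cdot h={\bf z}\cdot h'$ then $h^{-1}({\bf z})=h'^{-1}({\bf z})$ as formal series, and since the $z_i$ are independent transcendental generators this forces $h=h'$. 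Nonemptiness of ${\it Aut}_{X_\a}$ follows from the hypothesis that $X_{\bm\alpha}$ is a local trivialization of the $A(\mathfrak g)$-bundle, so each $X_\a$ admits at least one such system; hence ${\it Aut}_{X_\a}$ is a genuine $\mathcal H$-torsor, and choosing a basepoint ${\bf z}$ identifies it with $\mathcal H$ as in Definition~\ref{torsor}.

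\textbf{Main obstacle.} The delicate point is Step~2's claim that the formal reparametrization relating two arbitrary coordinate systems actually lies in the prescribed subgroup $\mathcal H$ rather than merely in the full automorphism group of the formal disk — i.e. that $\mathcal H$ is ``large enough'' to act transitively while still ``small enough'' to act freely. This hinges on the precise definition of ${\bf Aut}_{\bf z}\,{\bm\Oo}_X$ and on the compatibility of the coordinate systems across the covering; I expect one must either restrict attention to coordinate systems compatible with the trivialization data (so that transitivity is relative to that data) or invoke the analogue of the corresponding statement in \cite{BZF} verbatim. The remaining verifications — that substitution of formal power series is well-defined on ${\it Aut}_{X_\a}$ and respects composition — are routine and I would state them without grinding through the series manipulations.
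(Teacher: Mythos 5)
Your proof is correct and follows exactly the standard simple-transitivity argument for the group of formal coordinate changes acting on the set of formal coordinates, which is precisely what the paper relies on: the paper gives no proof of this lemma at all, only the reference ``(cf.\ \cite{BZF})''. The one subtlety you flag is genuine and is the only real gap in the statement itself --- since $\mathcal H$ is only assumed to be a \emph{subgroup} of ${\bf Aut}_{\bf z}\,{\bm \Oo}_{X}$, simple transitivity on \emph{all} sets of local parameters holds only if $\mathcal H$ is the full group (or if ${\it Aut}_{X_\a}$ is tacitly restricted to a single $\mathcal H$-orbit), and the paper does not address this either.
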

Thus, we can define the following twist.
\begin{definition}
\label{twist}
We introduce the $\mathcal H$-twist of $W_{\bf z}$  
$\E_{{\bf z}}=  W_{\bf z}  \;  
{  {}_\times \atop      {}^{   \mathcal H  }}
  \;{\it Aut}_{X_\a}$.      
 The original definition similar to \eqref{twist}  was given in 
\cite{BD, Wi}. 
\end{definition}
\subsection{Definition of the local part of
 prescribed rational functions bundle $\E(W_{ {\bf z}, \lambda})$}
  We now fix an infinite-dimensional Lie algebra $\mathfrak g$ satisfying requirements of 
subsection \ref{condg}.   
Suppose that a $\C$-grading is generated by $K_0$ on $W_{\bf z}$.  
 Let ${\mathfrak G} \subset Aut(\mathfrak g)$ be a $W_{\bf z}$-grading preserving 
subgroup of $Aut(\mathfrak g)$.  
 Denote by $\Oo_{\mathfrak g, A(\mathfrak g)}$ a 
subcategory of $\Oo_{A(\mathfrak g)}$ consisting of $A(\mathfrak g)$-modules
 $W_{\bf z}$ such that 
$\mathfrak G$  
acts on $W_{\bf z}$ as automorphisms.    
By using the ideas of \cite{BZF}, we formulate here  
the definition of the local part $\E(W_{ {\bf z}, \lambda })$ of the fiber bundle associated to $\mathfrak g$  
through matrix elements 
$F({\bf x})$ with ${\bf x}=({\bf g}, {\bf z})$,   
to 
the space 
$\Theta \left(n, k, W_{ {\bf z}, \lambda } \right)$  for all $n$, $k \ge 0$,   
of prescribed rational functions on a finite part $\left\{ X_\alpha, \alpha \in I_0 \right\}$      
 of a covering $\left\{X_{\alpha} \right\}$ of $X$.         
For the fiber space 
provided by elements $f({\bf x}) \in W_{  \bf z }$,   
using the property of prescribed rational functions
we form a principal $\mathcal H$-bundle,  which is 
  a fiber bundle $\E(W_{  {\bf z }, \l })$ defined by trivializations 
$i_{ {\bf z} }: F  ({\bf x})= 
  (\theta, f({\bf x})  )   \to X_a$,    
 together with a continuous 
 right action 
 $F({\bf x}) \times \mathcal H \to F({\bf x})$,   
such that $\mathcal H$    
preserves  $F({\bf x})$,  
 i.e.,  $\zeta$, $\zeta.a$ are sections of $\E(n, k, W_{  {\bf z }, \l })$ 
 for all $a \in \mathcal H$,   
and acts freely and transitively, i.e., the map $a \mapsto \zeta.a$ is a homeomorphism. 
Thus, we have \cite{BZF} 
\begin{lemma}
 The projection ${\it Aut}_{X_\a} \rightarrow X$ is a principal 
$\mathcal H$-bundle.  
The fiber of this bundle over $X_\a$ is the   
$\mathcal H$-torsor $ {\it Aut}_{X_\a} $.   
\end{lemma}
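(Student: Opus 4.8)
The plan is to obtain the bundle by gluing local trivializations indexed by the covering $\{X_\a\}$, using the torsor structure already in hand, and then to read off the fibre. First I would recall that ${\it Aut}_{X_\a}$ is the space whose point over $x\in X_\a$ is the set of all systems of local parameters of the $X$-covering centred at $x$, and that the projection ${\it Aut}_{X_\a}\to X$ sends a system of local parameters to its centre. By the preceding lemma, $\mathcal H$ acts on ${\it Aut}_{X_\a}$ over this projection, and the action is fibrewise simply transitive, so each fibre is an $\mathcal H$-torsor in the sense of Definition~\ref{torsor}: freeness is immediate from the torsor axioms, and transitivity is the assertion that any two systems of local parameters centred at the same point differ by a formal change of parameters, which by hypothesis lies in $\mathcal H\subset{\bf Aut}_{\bf z}\,{\bm \Oo}_{X}$.

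Next I would establish local triviality. Over each chart $X_\a$ I would fix a reference system $\xi_\a$ of local parameters; by the identification of a torsor with its group from the last sentence of Definition~\ref{torsor}, the assignment $\xi_\a\cdot h\mapsto h$ yields a homeomorphism ${\it Aut}_{X_\a}\big|_{X_\b}\;\widetilde{\to}\;X_\b\times\mathcal H$ for every $X_\b\subset X_\a$, compatible with the projection to $X$ and equivariant for the right $\mathcal H$-action. On a double overlap $X_\a\cap X_\b$ the two reference systems are related by $\xi_\a=\xi_\b\cdot g_{\b\a}$ with $g_{\b\a}\in\mathcal H$ varying over the overlap, and these $g_{\b\a}$ are the transition functions; the cocycle identity $g_{\ga\b}\,g_{\b\a}=g_{\ga\a}$ holds automatically because the $g$'s are mutual differences of reference systems.

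Gluing the trivializations along the $g_{\b\a}$ then produces the principal $\mathcal H$-bundle $p\colon{\it Aut}_{X_\a}\to X$; and since the fibre of a principal bundle is a copy of the structure group acting on itself, while we have exhibited $p^{-1}(x)$ for $x\in X_\a$ as the $\mathcal H$-torsor of local parameter systems at $x$, the fibre over $X_\a$ is the $\mathcal H$-torsor ${\it Aut}_{X_\a}$, as claimed.

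The main obstacle I expect is the topological bookkeeping rather than anything structural: one must equip $\mathcal H$ — a group of automorphisms of the formal parameters $\bf z$, hence infinite-dimensional — and ${\it Aut}_{X_\a}$ with topologies for which the fibrewise action of the preceding lemma and the transition maps $g_{\b\a}$ are continuous. I would handle this exactly as in \cite{BZF}, using the inverse-limit (pro-finite-dimensional) topology arising from truncating the formal parameters at each order, for which the change-of-parameters maps, and therefore the $g_{\b\a}$, are manifestly continuous; continuity and openness of $p$, together with local triviality, then follow directly from the local trivializations constructed above.
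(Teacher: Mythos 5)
Your proposal is correct and takes essentially the approach the paper intends: the paper gives no proof of this lemma at all, merely asserting it with the citation \cite{BZF}, and your argument --- fibrewise simple transitivity of $\mathcal H$ on systems of local parameters, local trivialization by choosing a reference system over each chart, the automatic cocycle identity for the resulting transition functions, and the pro-finite-dimensional topology making everything continuous --- is precisely the standard construction from that reference. No gaps.
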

Then we obtain
\begin{definition}
Given a finite-dimensional $\mathcal H$-module $W_{i_{{\bf z}_n}, \lambda}$,   
 let 
\[
\E( W_{  {\bf z }, \l }) = \bigoplus_{n, k \ge 0} W_{ i_{{\bf z}_n}, \lambda}   
{\times \atop \; \mathcal H } \; {\it Aut}_{X_a},    
\]
be the fiber bundle associated to $W_{i_{\bf z}, \lambda} $, ${\it Aut}_{X_a}$, 
and with sections provided by elements of $\Theta(n, k, W_{{\bf z}_n}, X_\a)$, for $n$, $k \ge 0$. 
\end{definition}
On $X$ we can choose  $\left\{X_\a\right\}$  
   such that the bundle $\E(W_{  {\bf z }, \l })$ over $X_\a$ is  
$X_\a \times F({\bf x})$.  
The fiber bundle $\E(W_{  {\bf z }, \l })$  
with fiber $f({\bf x}_n)$ is a map  
$\E(W_{  {\bf z }, \l }): \C^n  \rightarrow X$  where $\C^n$ is the total space 
of $\E(W_{  {\bf z }, \l })$ and $X$ is its base space.  
 For every $X_\a$ of $X$ 
 $i_{ {\bf z}_n }^{-1}$ is homeomorphic to $X_a \times \C^n$. 
Namely, 
we have for 
$ f( {\bf x}_n ): 
 i_{   {\bf z}_n}^{-1} \rightarrow  X_\a  \times \C^n$,  
that 
$\mathcal P \circ  f (  {\bf x}_n  )      
= i_{   {\bf z}_n } \circ 
 {   i^{-1}_{ {\bf z}_n }  }
  \left( X_\a   \right)  $,  
where $\mathcal P$ is the projection map on $ X_\a$.    
\subsection{Definitions of a twisted $A(\mathfrak g)$-bundle}  
\label{s6.1}
In this subsection we formulate (generalizing examples of other cases considered in \cite{DLMZ})     
the definition of a twisted fiber bundle associated  to 
$A(\mathfrak g)$-module  
$W_{\bf z} \in \Oo_{\mathfrak G, A(\mathfrak g)}$. 
 We obtain 
\begin{definition} 
\label{t3.1} 
 A twisted $A(\mathfrak g)$-bundle $\E$ over $X$    
with fiber $W_{\bf z}$ and $\Theta\left(n, k, W_{ \bf z}, X \right)$, $n$, $k \ge 0$-valued sections  
  is a direct sum of vector bundles  
$\E=\bigoplus_{\l \in \C} \E(W_{ {\bf z}, \lambda} )$,  
  such that all transition functions are 
$A(\mathfrak g)$-module isomorphisms,
and a family of continuous 
isomorphisms 
 $H_\a =\left\{ H_{\a, \l}, \l\in \C\right\}$,   
   of fiber bundles 
\[
 H_{\a, \lambda} : \E(W_{ {\bf z}, \lambda} )|_{X_\a } \to W_{ i_{\bf z}, \l} 
{\times \atop \; \mathfrak G} \; 
 {\it Aut}_{X_\a} ,    
\]
such that for transition functions 
$g_{  \a  \b, \l}  
=  H_{\a, \lambda} *_2 H_{\b, \lambda}^{-1}$,   
for all $\lambda \in \C$, then
\[
g_{\a \b}(x)= \left(g_{\a\b, \l}(\xi) \right): W_{\bf z} \to W_{\bf z},     
\]
 is an $A(\mathfrak g)$-module isomorphism  
for any $\xi \in  \left( X_{ \a} \bigcap  X_{\b}\right)$, where  
 the transition functions 
$g_{\a, \b}(x)$ are $G_{\bf z}$-valued.    
\end{definition}
Note that definitions of direct sum of bundles, sub-bundles and quotient bundles appear accordingly. 
We are also able to define graded twisted 
$A(\mathfrak g)$-bundles. For that purpose,   
 replace $W_{ {\bf z}, \l} \to L(W_{ {\bf z}, \l })$, 
$\E( W_{ {\bf z}, \lambda} )  \to \E^{\rm gr}( W_{ {\bf z}, \lambda } )$, 
$g_{  \a  \b, \l}$  by 
$\left(g^{\rm gr}_{\a \b, \l}\right)=\left(  H^{\rm gr}_{\a, \l}  \right) 
*_2  \left( H^{\rm gr}_{\b, \l} \right)^{-1}$,  
 and introduce   
  $\left(  H^{\rm gr}_{\a} \right)  =  \left(   
H_{\a}|_{   \left(   \E^{\rm gr} (W_{  {\bf z}, \lambda  }  \right) }  \right)$,  
for all $\l \in \C$. 
Then using Theorem \ref{P3.1}, we obtain
\begin{lemma}
The graded transition functions $(  g^{\rm gr}_{\a \b}  )(\xi)$ provide an  
$A(\mathfrak g)$-module isomorphism 
  $\left(    g^{\rm gr}_{\a\b, \l}  \right)(x): 
L(  W_{ {\bf z}_n } )\to L(  W_{ {\bf z}_n} )$,   
 for any $\l\in \C$ and 
$x\in  X_\a \bigcap  X_\b$. By Lemma \ref{l6.a},   
 $\E^{\rm gr}$ is a twisted $A(\mathfrak g)$-bundle over $X$. 
\end{lemma}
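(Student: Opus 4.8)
The plan is to verify that the two displayed properties --- the $A(\mathfrak g)$-module isomorphism property of the graded transition functions, and the conclusion that $\E^{\rm gr}$ is a twisted $A(\mathfrak g)$-bundle --- follow by transporting the structure of Definition~\ref{t3.1} along the functorial operation $W_{{\bf z},\l}\mapsto L(W_{{\bf z},\l})$. First I would recall from Theorem~\ref{P3.1} that for each $A(\mathfrak g)$-module $W_{{\bf z},\l}$ in $\Oo_{A(\mathfrak g)}$ the space of lowest weight vectors $L(W_{{\bf z},\l})$ is again an $A(\mathfrak g)$-module, in fact one lying in $\Oo_{A(\mathfrak g)}$, and that the assignment is compatible with the $\mathfrak G$-action as automorphisms, so that $L(W_{{\bf z},\l})$ lies in $\Oo_{\mathfrak G, A(\mathfrak g)}$ whenever $W_{{\bf z},\l}$ does. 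This makes the restricted bundles $\E^{\rm gr}(W_{{\bf z},\l})$ well defined as in the construction preceding the statement, with local trivializations $H^{\rm gr}_{\a,\l}=H_{\a,\l}|_{\E^{\rm gr}(W_{{\bf z},\l})}$.

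Next I would check the isomorphism property. By Lemma~\ref{l6.a}, an $A(\mathfrak g)$-module homomorphism $\varphi\colon W\to\widetilde W$ satisfies $\varphi(L(W))\subset L(\widetilde W)$, with equality when $\varphi$ is an isomorphism; hence restriction $\varphi\mapsto\varphi|_{L(W)}$ is a well-defined operation from $A(\mathfrak g)$-module isomorphisms $W_{{\bf z}_n}\to W_{{\bf z}_n}$ to $A(\mathfrak g)$-module isomorphisms $L(W_{{\bf z}_n})\to L(W_{{\bf z}_n})$. Applying this to the transition functions $g_{\a\b,\l}(\xi)=\bigl(H_{\a,\l}*_2 H_{\b,\l}^{-1}\bigr)(\xi)$, which by Definition~\ref{t3.1} are $A(\mathfrak g)$-module isomorphisms of $W_{\bf z}$ for $\xi\in X_\a\cap X_\b$, I obtain that $\bigl(g^{\rm gr}_{\a\b,\l}\bigr)(x)=g_{\a\b,\l}(x)|_{L(W_{{\bf z}_n})}$ is an $A(\mathfrak g)$-module isomorphism of $L(W_{{\bf z}_n})$ for every $\l\in\C$ and every $x\in X_\a\cap X_\b$. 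One should also verify that the multiplication $*_2$ intertwines with restriction, i.e. that restricting the $*_2$-composite $H^{\rm gr}_{\a,\l}*_2(H^{\rm gr}_{\b,\l})^{-1}$ agrees with the restriction of $H_{\a,\l}*_2 H_{\b,\l}^{-1}$; this holds because $*_2$ is defined residue-wise via the mode operators $g_l$, each of which maps $L(W)$ to $L(W)$ by the defining condition of $L(W)$, so the operation commutes with passing to the lowest weight subspace.

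Finally I would assemble these facts into the conclusion. The direct sum $\E^{\rm gr}=\bigoplus_{\l\in\C}\E^{\rm gr}(W_{{\bf z},\l})$ is a direct sum of vector bundles whose transition functions $\bigl(g^{\rm gr}_{\a\b,\l}\bigr)$ are $A(\mathfrak g)$-module isomorphisms by the previous paragraph, and the family $\bigl(H^{\rm gr}_\a\bigr)=\bigl(H_\a|_{\E^{\rm gr}(W_{{\bf z},\l})}\bigr)$ supplies the required continuous fiber-bundle isomorphisms onto $L(W_{i_{\bf z},\l})\,{\times\atop\,\mathfrak G}\,{\it Aut}_{X_\a}$; continuity is inherited from continuity of the $H_\a$ together with continuity of the projection to a direct summand. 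Thus all the clauses of Definition~\ref{t3.1} are satisfied with $W$ replaced by $L(W)$, and $\E^{\rm gr}$ is a twisted $A(\mathfrak g)$-bundle over $X$. I expect the main obstacle to be the compatibility check in the middle paragraph: one must be careful that the nontrivial multiplication $*_2$ used to define the transition functions genuinely restricts to the lowest weight subspace and that this restriction is still an $A(\mathfrak g)$-module isomorphism (not merely a linear isomorphism), which is precisely where Lemma~\ref{l6.a}, the mode-wise stability of $L(W)$, and Theorem~\ref{P3.1} must be combined.
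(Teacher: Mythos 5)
Your proposal is correct and follows essentially the same route the paper intends: the paper gives no separate proof but derives the lemma directly from Theorem~\ref{P3.1} (that $L(W_{\bf z})$ is an $A(\mathfrak g)$-module and $\Oo_{A(\mathfrak g)}$ is preserved by $L$) together with Lemma~\ref{l6.a} (that an $A(\mathfrak g)$-module isomorphism restricts to an isomorphism of lowest-weight subspaces), which is exactly your argument. Your additional check that the $*_2$-composite defining the transition functions commutes with restriction to $L(W)$ is a useful elaboration the paper leaves implicit.
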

 Let $A(\mathfrak g )$ and $\widetilde{A}(\widetilde {\mathfrak{g}} )$ be two associative algebras with
anti-involutions $\nu_A$ and $\nu_{\widetilde{A}}$ respectively. Then, similar to  \cite{DLMZ}, one has 
\begin{lemma}
 $A(\mathfrak g )\otimes_\C \widetilde{A}(\widetilde {\mathfrak{g}})$ is an associative algebra with anti-involution
$\nu_A\otimes \nu_{\widetilde{A}}$. 
\end{lemma}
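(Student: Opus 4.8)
The plan is to verify the defining properties of an associative algebra with anti-involution directly on the tensor product $A(\mathfrak g)\otimes_{\C}\widetilde A(\widetilde{\mathfrak g})$, carrying over the structure component-wise. First I would equip the underlying vector space $A(\mathfrak g)\otimes_{\C}\widetilde A(\widetilde{\mathfrak g})$ with the standard tensor-product multiplication, $(a\otimes \widetilde a)\cdot (b\otimes \widetilde b)=(a*_1 b)\otimes(\widetilde a\,\widetilde *_1\,\widetilde b)$, where $*_1$ is the bilinear operation from Theorem \ref{P3.10} on $A(\mathfrak g)$ and $\widetilde *_1$ the corresponding one on $\widetilde A(\widetilde{\mathfrak g})$, extended bilinearly to all of $A(\mathfrak g)\otimes_{\C}\widetilde A(\widetilde{\mathfrak g})$. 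Associativity of this product is then immediate: it reduces, factor by factor, to the associativity of $*_1$ and $\widetilde *_1$ on the two tensor factors, which is exactly the content of Theorem \ref{P3.10}. If $A(\mathfrak g)$ or $\widetilde A(\widetilde{\mathfrak g})$ carries a unit this also passes to $\otimes_{\C}$; otherwise no unit is claimed and nothing further is needed.

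Next I would define the candidate anti-involution $\nu:=\nu_A\otimes\nu_{\widetilde A}$ by $\nu(a\otimes\widetilde a)=\nu_A(a)\otimes\nu_{\widetilde A}(\widetilde a)$, again extended $\C$-linearly. One has to check three things: $\nu$ is $\C$-linear (clear, by construction), $\nu$ is an involution, i.e. $\nu\circ\nu=\id$, and $\nu$ reverses products. The involution property follows since $\nu^2(a\otimes\widetilde a)=\nu_A^2(a)\otimes\nu_{\widetilde A}^2(\widetilde a)=a\otimes\widetilde a$ because $\nu_A$ and $\nu_{\widetilde A}$ are themselves involutions; linearity then extends this to the whole space. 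For the anti-homomorphism property, compute
\[
\nu\bigl((a\otimes\widetilde a)\cdot(b\otimes\widetilde b)\bigr)
=\nu_A(a*_1 b)\otimes\nu_{\widetilde A}(\widetilde a\,\widetilde *_1\,\widetilde b)
=\bigl(\nu_A(b)*_1\nu_A(a)\bigr)\otimes\bigl(\nu_{\widetilde A}(\widetilde b)\,\widetilde *_1\,\nu_{\widetilde A}(\widetilde a)\bigr),
\]
using that $\nu_A$, $\nu_{\widetilde A}$ are anti-involutions on their respective algebras; the right-hand side is exactly $(\nu_B(b\otimes\widetilde b))\cdot(\nu_A(a\otimes\widetilde a))$ with the tensor-product multiplication, so $\nu$ reverses products. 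Bilinearity of both multiplications lets this extend from simple tensors to arbitrary elements.

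I do not expect a genuine obstacle here; the statement is a routine bookkeeping lemma whose only mildly delicate point is making sure that extending $\nu$ and the multiplication $\C$-linearly (resp. bilinearly) from simple tensors is legitimate and compatible — i.e. that the formulas are well defined on $A(\mathfrak g)\otimes_{\C}\widetilde A(\widetilde{\mathfrak g})$ and not merely on the generating simple tensors. This is handled by the universal property of the tensor product: a bilinear map out of $A(\mathfrak g)\times\widetilde A(\widetilde{\mathfrak g})$ (here $(a,\widetilde a)\mapsto\nu_A(a)\otimes\nu_{\widetilde A}(\widetilde a)$, and likewise the multiplication as a bilinear map in each argument) factors uniquely through $\otimes_{\C}$, so all the identities verified on simple tensors hold identically. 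The only arithmetic subtlety worth a remark is the potential appearance of signs when $A(\mathfrak g)$ or $\widetilde A(\widetilde{\mathfrak g})$ is $\Z$- or $\C$-graded and one uses a graded (Koszul-signed) tensor product; if the intended construction is the ungraded tensor product — as the notation $\otimes_{\C}$ and the parallel statement in \cite{DLMZ} suggest — then no signs arise and the computation above is complete as written.
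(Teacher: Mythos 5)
Your proof is correct and is exactly the routine component-wise verification the paper intends: the paper itself gives no argument for this lemma, simply deferring to \cite{DLMZ}, and your check of associativity, the involution property $\nu^2=\id$, product reversal, and well-definedness via the universal property of $\otimes_\C$ covers everything needed (the ungraded tensor product is indeed the one meant, so no Koszul signs arise). The only blemish is the stray symbol $\nu_B$ in your displayed computation, which should read $\nu$.
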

\section{Properties of twisted $A(\mathfrak g)$-bundles}
%
In this section we reveal 
 properties of twisted $A(\mathfrak g)$-bundles. 
In particular we show that twisted $A(\mathfrak g)$-bundle behaves well under standard 
operations, and are invariant under homotopy transformations of $X$. 
 Note that in the simplest case of $A(\mathfrak g)=\C$ the twisted $A(\mathfrak g)$-bundle  
is a classical complex vector bundle 
over $X$.
%
 Let $\E$, $\widetilde{\E}$ be 
 $A(\mathfrak g)$ and $A(\widetilde{\mathfrak g})$-bundles over $X$. Then we have 
\begin{lemma}
Then $\E\otimes \widetilde{\E}$ is a
$A(\mathfrak g) \otimes_{\C} A(\widetilde{\mathfrak g})$-bundle over $X$.
 In particular, if $A(\mathfrak g)=\C$ then
$\E\otimes \widetilde{\E}$ is again a $\widetilde{A}$-bundle over $X$. 
\end{lemma}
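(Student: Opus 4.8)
The plan is to reduce the statement for $\E\otimes\widetilde\E$ to the two structural facts already established, namely that $A(\mathfrak g)\otimes_\C\widetilde A(\widetilde{\mathfrak g})$ is an associative algebra with anti-involution $\nu_A\otimes\nu_{\widetilde A}$, and that a twisted $A(\mathfrak g)$-bundle in the sense of Definition \ref{t3.1} is determined by a system of local trivializations together with $A(\mathfrak g)$-module transition functions $g_{\a\b}(x)$ satisfying the cocycle compatibility. First I would fix a common refinement $\{X_\a\}$ of the two trivializing coverings of $\E$ and $\widetilde\E$ (possible since $X$ is compact), so that over each $X_\a$ we have $\E|_{X_\a}\cong X_\a\times W_{\bf z}$ and $\widetilde\E|_{X_\a}\cong X_\a\times \widetilde W_{\bf z}$, with transition functions $g_{\a\b}(x)\in\mathrm{Aut}_{A(\mathfrak g)}(W_{\bf z})$ and $\widetilde g_{\a\b}(x)\in\mathrm{Aut}_{\widetilde A(\widetilde{\mathfrak g})}(\widetilde W_{\bf z})$. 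Then $\E\otimes\widetilde\E$ is the bundle whose local pieces are $X_\a\times(W_{\bf z}\otimes_\C\widetilde W_{\bf z})$, glued by $g_{\a\b}(x)\otimes\widetilde g_{\a\b}(x)$.

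The key steps, in order, are: (i) verify that $W_{\bf z}\otimes_\C\widetilde W_{\bf z}$ is an $A(\mathfrak g)\otimes_\C\widetilde A(\widetilde{\mathfrak g})$-module via $(a\otimes\widetilde a)(w\otimes\widetilde w)=aw\otimes\widetilde a\widetilde w$, and that it lies in the appropriate category $\Oo_{A(\mathfrak g)\otimes\widetilde A(\widetilde{\mathfrak g})}$ — here one uses that each graded piece is a tensor product of finite-dimensional modules, hence finite dimensional, and that the weight decompositions multiply correctly so the $\Hom$-vanishing between distinct weights is inherited; (ii) check that $g_{\a\b}(x)\otimes\widetilde g_{\a\b}(x)$ is an $A(\mathfrak g)\otimes_\C\widetilde A(\widetilde{\mathfrak g})$-module isomorphism — this is immediate from (i) since a tensor product of module isomorphisms over the two factors is a module isomorphism over the tensor product algebra; (iii) confirm the cocycle condition $(g_{\a\b}\otimes\widetilde g_{\a\b})(g_{\b\g}\otimes\widetilde g_{\b\g})=(g_{\a\g}\otimes\widetilde g_{\a\g})$, which follows by applying the cocycle conditions for the two factors separately; (iv) assemble the data into the direct-sum-over-$\l$ form required by Definition \ref{t3.1}, using that $\E\otimes\widetilde\E=\bigoplus_{\l,\mu}\E(W_{{\bf z},\l})\otimes\widetilde\E(\widetilde W_{{\bf z},\mu})$ and regrading by total weight; (v) deduce the anti-involution $\nu_A\otimes\nu_{\widetilde A}$ from the preceding lemma on tensor products of associative algebras, so that the transition functions are $A(\mathfrak g)\otimes_\C\widetilde A(\widetilde{\mathfrak g})$-module isomorphisms with respect to the correct involution in the sense of Lemma \ref{l6.1}. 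The special case $A(\mathfrak g)=\C$ then follows because $\C\otimes_\C\widetilde A(\widetilde{\mathfrak g})\cong\widetilde A(\widetilde{\mathfrak g})$ canonically, and $\E$ being a $\C$-bundle means its transition functions are ordinary $\C$-linear automorphisms, so tensoring leaves the $\widetilde A$-bundle structure on $\widetilde\E$ intact up to the rank of $\E$.

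The main obstacle I anticipate is step (i), specifically making precise that the tensor product module again satisfies the defining conditions on $G_{\bf z}$ from subsection \ref{condg} and the prescribed-analytic-behavior conditions of Definition \ref{defcomp} — one must supply the new bounding integers $\beta$ for the tensor product (taking, e.g., sums of the corresponding integers for the two factors) and check that the sections $\Theta(n,k,W_{\bf z}\otimes\widetilde W_{\bf z},X)$ are genuinely the tensor products of the two sheaves of prescribed rational functions, i.e., that the absolute-convergence and analytic-extension statements are preserved under multiplication of matrix elements. I would handle this by noting that a matrix element for $W_{\bf z}\otimes\widetilde W_{\bf z}$ with respect to $\theta\otimes\widetilde\theta$ factors as a product $F({\bf x}_n)\widetilde F({\bf x}_n)$ of the two individual matrix elements, and a product of rational functions with poles only on the diagonal is again such, with pole orders adding — so all the prescribed-behavior conditions \eqref{cond1}--\eqref{loconj}, \eqref{defcomp}, \eqref{shushu} pass to the product. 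Once the module-theoretic bookkeeping is done, the bundle-theoretic assertions are formal, exactly parallel to the untwisted case treated in \cite{DLMZ}.
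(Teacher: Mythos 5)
The paper offers no proof of this lemma at all: it is stated bare, immediately after the (also unproved) assertion that $A(\mathfrak g)\otimes_\C \widetilde{A}(\widetilde{\mathfrak g})$ is an associative algebra with anti-involution $\nu_A\otimes\nu_{\widetilde{A}}$, with only an implicit appeal to the untwisted case in \cite{DLMZ}. So there is no argument of the author's to compare yours against; your proposal fills a genuine gap, and it does so correctly and along the only natural route: common refinement of the trivializing covers, tensor product of fibers and of transition functions, cocycle condition checked factorwise, regrading of $\bigoplus_{\l,\mu}\E(W_{{\bf z},\l})\otimes\widetilde{\E}(\widetilde{W}_{{\bf z},\mu})$ by total weight, and the identification $\C\otimes_\C\widetilde{A}\cong\widetilde{A}$ for the special case. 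Two small points of care. First, in your step on matrix elements: a general functional on $W_{\bf z}\otimes\widetilde{W}_{\bf z}$ is a finite sum of pure tensors $\theta\otimes\widetilde{\theta}$, so the matrix elements you must control are finite sums of products $F({\bf x}_n)\widetilde{F}({\bf x}_n)$ rather than single products; this changes nothing essential, since each summand is rational with poles only on the diagonal and the pole order of the sum is bounded by the maximum over summands of $\beta+\widetilde{\beta}$, so the conditions of Definition~\ref{defcomp} are still met with the bound you propose. Second, the paper's Definition~\ref{t3.1} defines transition functions via the $*_2$ multiplication, $g_{\a\b,\l}=H_{\a,\l}*_2 H_{\b,\l}^{-1}$, rather than ordinary composition; to be literally consistent with that convention you would form the tensor-product transition functions inside $A(\mathfrak g)\otimes_\C\widetilde{A}(\widetilde{\mathfrak g})$ with its induced $*_2$, but this is a peculiarity of the paper's definition rather than a defect of your argument.
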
 
Let $\E$ be an $A(\mathfrak g)$-bundle over $X$.  
 Introduce $\E'=\oplus_{\l\in \C}(\E(W_{ {\bf z}, \lambda} ))^*$. 
Then, due to this definition and properties 
of the non-degenerate bilinear pairing $(., .)$, we obtain 
\begin{lemma}
\label{dualba} 
 The dual bundle $\E'$ is also a twisted $A(\mathfrak g)$-bundle. 
\end{lemma}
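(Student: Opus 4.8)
The plan is to show that $\E'$ satisfies each clause of Definition~\ref{t3.1}, transporting the structure on $\E$ through the non-degenerate pairing $(.,.)$. First I would observe that $\E' = \bigoplus_{\l\in\C}(\E(W_{{\bf z},\lambda}))^*$ is by construction a direct sum of vector bundles, so the only substantive points are (i) that each summand $(\E(W_{{\bf z},\lambda}))^*$ is again of the form $\E(\widetilde W_{{\bf z},\lambda})$ for some $A(\mathfrak g)$-module $\widetilde W_{\bf z}\in\Oo_{\mathfrak G,A(\mathfrak g)}$, namely $\widetilde W_{\bf z}=W'_{\bf z}$ equipped with the dual $\mathcal H$-action, and (ii) that the transition functions and the trivializing isomorphisms behave correctly under dualization. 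For (i) I would invoke Lemma~\ref{l6.1}: $W'_{\bf z}$ is an $A(\mathfrak g)$-module with $(a\,m',m)=(m',\nu(a)\,m)$, and one checks that the $\mathcal H$-action dual to the $\mathcal H$-action on $W_{\bf z}$ makes $W'_{\bf z}$ into a $\mathcal H$-module preserving the grading by $K_0$ (the dual grading), so $W'_{\bf z}\in\Oo_{\mathfrak G,A(\mathfrak g)}$, and the associated fiber bundle $\E(W'_{{\bf z},\lambda})$ over the same torsor ${\it Aut}_{X_\a}$ is well defined.

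Next I would set up the trivializing isomorphisms for $\E'$. Given the family $H_{\a,\lambda}:\E(W_{{\bf z},\lambda})|_{X_\a}\to W_{i_{\bf z},\l}\;{}_\times^{\ \mathfrak G}\;{\it Aut}_{X_\a}$ for $\E$, I would define $H'_{\a,\lambda}$ on $\E'$ to be the inverse transpose, i.e.\ the fiberwise adjoint of $H_{\a,\lambda}^{-1}$ with respect to the pairing $(.,.)$; this is again a continuous isomorphism of fiber bundles because $(.,.)$ is non-degenerate and $H_{\a,\lambda}$ is a homeomorphism on fibers. The transition functions of $\E'$ are then $g'_{\a\b,\l}=H'_{\a,\lambda}*_2 (H'_{\b,\lambda})^{-1}$, and the key computation is that these equal the inverse transposes $(g_{\a\b,\l})^{*-1}$ of the transition functions of $\E$. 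Since each $g_{\a\b,\l}(x):W_{\bf z}\to W_{\bf z}$ is an $A(\mathfrak g)$-module isomorphism, its inverse transpose is an $A(\mathfrak g)$-module isomorphism of $W'_{\bf z}$: indeed if $\varphi$ is $A(\mathfrak g)$-linear and invertible on $W_{\bf z}$, then for $a\in A(\mathfrak g)$, $m'\in W'_{\bf z}$, $m\in W_{\bf z}$ one has $((\varphi^{*-1})(a\,m'),m)=(a\,m',\varphi^{-1}m)=(m',\nu(a)\varphi^{-1}m)=(m',\varphi^{-1}(\nu(a)m))=((\varphi^{*-1}m'),\nu(a)m)=(a\,(\varphi^{*-1}m'),m)$, using $A(\mathfrak g)$-linearity of $\varphi^{-1}$ and Lemma~\ref{l6.1}; non-degeneracy of $(.,.)$ then gives $\varphi^{*-1}(a\,m')=a\,(\varphi^{*-1}m')$. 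Finally I would note that on overlaps the compatibility $i^{-1}_{{\bf z},X_\a}\circ i_{{\bf z},X_\b}$ giving a representation of $\mathcal H$ on $W_{\bf z}$ dualizes to the corresponding representation on $W'_{\bf z}$, so the cocycle condition for $\E'$ holds and sections are again furnished by the dual prescribed-rational-function spaces $\Theta(n,k,W'_{{\bf z}},X_\a)$.

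The main obstacle I anticipate is checking that the dualization interacts correctly with the twisted multiplication $*_2$ appearing in the transition-function formula $g_{\a\b,\l}=H_{\a,\lambda}*_2 H_{\b,\lambda}^{-1}$, rather than with ordinary composition: one must verify that $*_2$-composition of the adjoint maps $H'_{\a,\lambda}$ reproduces the adjoint of the $*_2$-composition on $\E$, which requires knowing how the anti-involution $\nu$ (equivalently the map $\phi$ of Theorem~\ref{P3.10}) intertwines $*_2$ with the pairing. Concretely, one needs the identity $(a*_2 b\,m',m)=(m',\nu(b)*_?\nu(a)\,m)$ or its $*_1$-analogue; establishing the precise form of this relation — and hence that $\nu$ descends to an anti-automorphism compatible with the $*_2$-defined transition cocycle — is the technical crux. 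Once that identity is in hand, everything else is the routine bookkeeping of transporting a twisted-bundle structure through a non-degenerate pairing, and $\E'$ is exhibited as a twisted $A(\mathfrak g)$-bundle over $X$.
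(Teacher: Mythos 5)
Your proposal is correct in substance, and in fact it supplies far more than the paper does: the paper offers no proof of Lemma~\ref{dualba} at all, merely asserting that it holds ``due to this definition and properties of the non-degenerate bilinear pairing.'' Your central computation --- that the inverse transpose $\varphi^{*-1}$ of an invertible $A(\mathfrak g)$-module map $\varphi$ is again an $A(\mathfrak g)$-module map on $W'_{\bf z}$, via the chain $((\varphi^{*-1})(a\,m'),m)=(a\,m',\varphi^{-1}m)=(m',\nu(a)\varphi^{-1}m)=\cdots=(a\,(\varphi^{*-1}m'),m)$ together with non-degeneracy --- is exactly the content that the paper's one-line justification gestures at, and it correctly combines Lemma~\ref{l6.1} with the definition of the transition functions. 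Your identification of $W'_{\bf z}$ as the fiber and of the dual $\mathcal H$-action on the torsor side is also the natural reading of Definition~\ref{t3.1}.

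The one caveat you raise --- whether dualization is compatible with the $*_2$-composition in $g_{\a\b,\l}=H_{\a,\lambda}*_2 H_{\b,\lambda}^{-1}$, i.e.\ whether an identity of the form $(a*_2 b\,m',m)=(m',\nu(b)*\nu(a)\,m)$ holds --- is a genuine ambiguity, but it is an ambiguity in the paper's own formalism rather than a gap in your argument: the paper never explains how $*_2$ acts on trivialization maps, and its own later use of the pairing (the computation in the proof of Lemma~\ref{l3.1}, where $\left(g^*_{\a\b,\l}(\xi)\theta, g_{\a\b,\l}(\xi)u\right)=(\theta,u)$ is asserted directly from the $*_2$-form of the transition functions) simply assumes the compatibility you are asking for. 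So your proof is at least as rigorous as anything in the source, and your flagged ``technical crux'' is precisely the point a careful referee would press the author on.
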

\begin{definition}
 Let $\E$, $\widetilde \E$ be two twisted $A(\mathfrak g)$-bundles on $X$. A map
$\eta: \E \to \widetilde{\E}$, 
 is called a twisted $A(\mathfrak g)$-bundle morphism   
if there exist a 
family of continuous morphisms of fiber bundles 
\[
\eta_\l:  \E( W_{ {\bf z}, \lambda} )  \to \widetilde{\E}( W_{ {\bf z}, \lambda} ),   
\]
 such that with $\eta = \left(   \eta_\l  \right)$, for all $\l\in \C$, and 
 $\eta_\l: \E \to \widetilde {\E}$,  
is an $A(\mathfrak g)$-module morphism for any $\xi\in X$.   
\end{definition}
It follows from \cite{DLMZ} that the following lemma is true for $A(\mathfrak g)$. 
\begin{lemma}
\label{l3.1}
 Let $\E$ be a twisted $A(\mathfrak g)$-bundle on $X$.
Then 
$\E \bigoplus \E'$ is a twisted $A(\mathfrak g)$-bundle,  
 with nondegenerate symmetric
invariant bilinear pairing  
\begin{equation}
\label{dyrda}
\left(g_{\a\b}^*(\xi)\theta, g_{\a\b}(\xi) u \right)=\left(\theta, u \right),  
\end{equation}
 which is an invariant of $\E$ 
(i.e., does not depend on $g_{\a\b}$ for all $\a$, $\b \in I$, $\xi \in X_\a\cap X_\b$,
$\xi \in G_{{\bf z}_n}$, $\theta \in G'_{{\bf z}_n}$)  
induced from the natural bilinear from on
$G_{\bf z}  \oplus  G'_{\bf z}$.  
\end{lemma}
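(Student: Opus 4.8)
The plan is to build the bundle $\E \oplus \E'$ explicitly and then verify the three clauses of the statement in turn: that $\E \oplus \E'$ is a twisted $A(\mathfrak g)$-bundle, that \eqref{dyrda} defines a nondegenerate symmetric invariant bilinear pairing, and that this pairing is independent of the choice of transition data. For the first clause I would invoke Lemma~\ref{dualba}, which says $\E'$ is itself a twisted $A(\mathfrak g)$-bundle, together with the earlier observation that a direct sum of twisted $A(\mathfrak g)$-bundles is again one (this is the remark following Definition~\ref{t3.1}, where direct sums, sub-bundles and quotients are declared to carry over). Concretely, the transition functions of $\E \oplus \E'$ over $X_\a \cap X_\b$ are $g_{\a\b,\l} \oplus (g_{\a\b,\l}^{-1})^{\ast}$ acting on $W_{{\bf z},\l} \oplus W_{{\bf z},\l}^{\ast}$, and each is an $A(\mathfrak g)$-module isomorphism because $g_{\a\b,\l}$ is one and $\nu$ exchanges $W$ and $W'$ compatibly (Lemma~\ref{l6.1}).

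Next I would set up the pairing. On each fiber $W_{{\bf z},\l} \oplus W_{{\bf z},\l}^{\ast}$ there is the canonical evaluation pairing $\langle (u,\theta),(u',\theta')\rangle = (\theta,u') + (\theta',u)$, which is manifestly symmetric and nondegenerate; I would take \eqref{dyrda} to be the fiberwise expression of this in a trivialization over $X_\a$. The key point is that this local formula glues to a well-defined global pairing on $\E \oplus \E'$, i.e.\ is independent of the choice of $X_\a$. This is exactly the content of the identity \eqref{dyrda}: if one changes trivialization by the transition function $g_{\a\b}$, the vector $u$ transforms as $u \mapsto g_{\a\b}(\xi)u$ while the covector $\theta$ transforms as $\theta \mapsto g_{\a\b}^{\ast}(\xi)\theta$, and $(g_{\a\b}^{\ast}(\xi)\theta,\, g_{\a\b}(\xi)u) = (\theta,\nu(g_{\a\b}(\xi))g_{\a\b}(\xi)u)$ by Lemma~\ref{l6.1}; invariance of the pairing then requires this to equal $(\theta,u)$, which holds because $g_{\a\b}(\xi)$ is an $A(\mathfrak g)$-module isomorphism and $\nu$ is the anti-involution dual to it, so $\nu(g_{\a\b}(\xi))$ acts as the inverse on the pairing. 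Thus the local expressions agree on overlaps and the pairing is a genuine invariant of $\E$, not of the chosen cocycle $\{g_{\a\b}\}$. Invariance under the $A(\mathfrak g)$-action in the sense of the earlier definition follows from the same computation with $g_{\a\b}(\xi)$ replaced by an arbitrary $a \in A(\mathfrak g)$.

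The remaining point is that the pairing descends from the natural bilinear form on $G_{\bf z} \oplus G'_{\bf z}$ to the bundle level: since $W_{\bf z} \subset G_{\bf z}$ and $W'$ is the restriction of $G'_{\bf z}$ under $(.,.)$, and all transition functions are $G_{\bf z}$-valued $A(\mathfrak g)$-module isomorphisms, the form $(\theta,u)$ on $G'_{\bf z} \oplus G_{\bf z}$ restricts to each fiber and is preserved by transitions, so it assembles into a global section of the appropriate bundle of bilinear forms. I expect the main obstacle to be purely bookkeeping: one must check that the sections valued in $\Theta(n,k,W_{\bf z},X)$ are compatible with the pairing, i.e.\ that pairing a section of $\E$ with a section of $\E'$ produces a well-defined scalar (respectively a prescribed rational function) rather than something depending on the local chart. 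This reduces, via the $T_G$- and $K_G$-properties and the locality/associativity conditions of \S\ref{condg}, to the already-established fact that matrix elements $F({\bf x}_n) = (\theta,f({\bf x}_n))$ transform covariantly; so the verification is routine once the transformation rule \eqref{dyrda} is in hand. No genuinely new estimate or construction is needed beyond Lemmas~\ref{l6.1}, \ref{dualba} and Definition~\ref{t3.1}.
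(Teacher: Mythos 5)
Your proposal follows essentially the same route as the paper: the paper likewise reduces the lemma to verifying the invariance identity \eqref{dyrda} for the transition functions $g_{\a\b,\l}=H_{\a,\l}*_2 H_{\b,\l}^{-1}$, relying on Lemma~\ref{dualba} for the dual bundle, only far more tersely than you do. One small caution: your intermediate step $(g^*_{\a\b}(\xi)\theta, g_{\a\b}(\xi)u)=(\theta,\nu(g_{\a\b}(\xi))g_{\a\b}(\xi)u)$ misapplies Lemma~\ref{l6.1}, which concerns elements $a\in A(\mathfrak g)$ rather than module isomorphisms; the identity \eqref{dyrda} instead holds because $g^*_{\a\b}$ acts on $\theta$ as the contragredient $(g_{\a\b}^{-1})^*$, exactly as you correctly set it up in your first paragraph.
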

\begin{proof}
For ordinary bundles this lemma was proven in \cite{DLMZ}. 
Here we have to check \eqref{dyrda} in the twisted case. 
Namely, for particular $\l \in \C$, using the definition of $g_{\a\b}$ and multiplication $*_2$, consider 
$\left(g^*_{  \a  \b, \l} (\xi)\theta, g_{  \a  \b, \l}(\xi)u \right)  
=  \left ( (H_{\a, \lambda} *_2 H_{\b, \lambda}^{-1})^*\theta, 
(H_{\a, \lambda} *_2 H_{\b, \lambda}^{-1})u \right)$. One sees that it equals to 
 $(\theta, u)$. 
Thus, the pairing  
$(g_{\a\b}^*(\xi)\theta, g_{\a\b}(\xi)u)$ 
does not depend on any $\a$, $\b \in I$.  
\end{proof}
It is useful to introduce the following 
\begin{definition}
A twisted $A(\mathfrak g)$-bundle $\E$ is called trivial if there exists an $A(\mathfrak g)$-bundle 
isomorphism
 $\varphi: \E \to W \times X$,  
here $W \times X$ is the natural 
$A(\mathfrak g)$-bundle on $X$ with $W$ as fibers.  
\end{definition}
Finally, we provide proofs for generalizations of two propositions given in \cite{DLMZ} 
for the case of twisted $A(\mathfrak g)$-bundles. 
\begin{proposition}
\label{p3.4} 
For any twisted $A(\mathfrak g)$-bundle $\E$,  
there exists a twisted 
$A(\mathfrak g)$-bundle $\widetilde {\E}$ such that $\E \bigoplus \widetilde{\E}$ 
is geometrically covariant for $\E$, i.e.,   $\E \bigoplus \widetilde{\E}$  
is a trivial $A(\mathfrak g)$-bundle. 
\end{proposition}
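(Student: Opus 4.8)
The plan is to mimic the classical proof that every vector bundle over a compact space is a direct summand of a trivial bundle, adapted to the twisted $A(\mathfrak g)$-bundle setting. First I would use compactness of $X$: from the covering $\{X_\alpha, \alpha \in I\}$ that provides the local trivializations of $\E$, extract a finite subcover $\{X_{\alpha}, \alpha \in I_0\}$, and choose a partition of unity $\{\rho_\alpha\}$ subordinate to it. Over each $X_\alpha$ the bundle $\E|_{X_\alpha}$ is, by Definition \ref{t3.1} together with the trivializations $H_{\alpha,\lambda}$, isomorphic to the trivial bundle with fiber $W_{\bf z} = \bigoplus_{\lambda} W_{{\bf z},\lambda}$. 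The idea is to form the finite direct sum $\bigoplus_{\alpha \in I_0}(W_{\bf z} \times X)$, a trivial $A(\mathfrak g)$-bundle, and to embed $\E$ into it fiberwise by the map sending a section $\zeta$ to $(\rho_\alpha \cdot H_{\alpha}(\zeta))_{\alpha \in I_0}$, where on each overlap the transition functions $g_{\alpha\beta,\lambda} = H_{\alpha,\lambda} *_2 H_{\beta,\lambda}^{-1}$ guarantee that this patches to a globally defined, continuous, fiberwise-injective $A(\mathfrak g)$-module morphism; injectivity on each fiber follows because at any point some $\rho_\alpha$ is nonzero and $H_\alpha$ is an isomorphism there.

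Next I would define $\widetilde{\E}$ to be the quotient (equivalently, the orthogonal complement with respect to the natural invariant pairing on $\bigoplus_{\alpha}(W_{\bf z} \oplus W'_{\bf z})$ provided by Lemma \ref{l3.1}) so that $\E \oplus \widetilde{\E} \cong \bigoplus_{\alpha \in I_0}(W_{\bf z} \times X)$. To see that $\widetilde{\E}$ is again a twisted $A(\mathfrak g)$-bundle rather than merely a topological quotient, I would check that the complementary projection is realized fiberwise by $A(\mathfrak g)$-module maps: since $\E$ is an $A(\mathfrak g)$-subbundle of a trivial $A(\mathfrak g)$-bundle and the pairing \eqref{dyrda} is invariant, the orthogonal complement is $\nu$-stable, hence its transition functions (restrictions of the trivial ones composed with the complementary projection) are $A(\mathfrak g)$-module isomorphisms on each $X_\alpha \cap X_\beta$, as required by Definition \ref{t3.1}. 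The direct sum $\E \oplus \widetilde{\E}$ is then literally the trivial bundle $\bigoplus_{\alpha \in I_0}(W_{\bf z}\times X)$ with the natural $A(\mathfrak g)$-bundle structure, which exhibits geometric covariance.

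The main obstacle I anticipate is the compatibility of the embedding with the nonstandard multiplication $*_2$ appearing in the transition functions, rather than ordinary composition: the gluing identity that makes $\zeta \mapsto (\rho_\alpha H_\alpha(\zeta))_\alpha$ well defined must be phrased using $g_{\alpha\beta,\lambda} = H_{\alpha,\lambda} *_2 H_{\beta,\lambda}^{-1}$, and one must verify that the fiberwise $A(\mathfrak g)$-module structure is preserved under $*_2$-twisted gluing — this is exactly where Theorem \ref{P3.10} (associativity of $*_1$ and the anti-involution $\nu$) and the invariance of the pairing are needed, so that the complement is $A(\mathfrak g)$-stable. A secondary technical point is ensuring that the decomposition respects the $\C$-grading $\E = \bigoplus_{\lambda} \E(W_{{\bf z},\lambda})$ degree by degree, so that the constructed $\widetilde{\E}$ is itself graded in the sense of Definition \ref{t3.1}; this reduces to carrying out the above construction separately for each $\lambda$ using the grading-preserving group $\mathfrak G$ and then taking the direct sum over $\lambda$. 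Once these points are settled, the rest — continuity of the maps, the partition-of-unity estimates, and checking that $\varphi$ is an $A(\mathfrak g)$-bundle isomorphism onto $W \times X$ with $W = \bigoplus_{\alpha \in I_0} W_{\bf z}$ — is routine and parallels \cite{DLMZ}.
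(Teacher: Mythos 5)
Your proposal follows essentially the same route as the paper: extract a finite subcover, use a partition of unity to embed $\E$ into the trivial bundle $W^{\oplus k}\times X$ via the local trivializations, and take $\widetilde{\E}$ to be the orthogonal complement of $\psi(\E)$ with respect to the invariant pairing supplied by Lemma \ref{l3.1}, with Lemma \ref{dualba} guaranteeing the complement is again a twisted $A(\mathfrak g)$-bundle. The only detail worth noting is that the paper normalizes the partition functions so that $\sum_{\a}\chi_\a^2=\mathrm{Id}_W$, which makes $\psi$ literally pairing-preserving rather than merely injective.
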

\begin{proof}
Due to the properties of non-degenerate bilinear pairing, it is naturally to use it to  
 characterize a twisted $A(\mathfrak g)$-bundle $\E$. 
 By Lemma \ref{l3.1} we can assume that there is a
nondegenerate invariant symmetric bilinear pairing on $\E$ induced from the pairing on $G_{\bf z}$. 
We are able to choose a finite covering $\left\{X_\a, \a \in I_0 \right\}$ in  
the definition \ref{t3.1}.   
For $\alpha\in I_0$, 
 let us set  
$H_\a= (h_{\a}(\epsilon), \pi(\epsilon))$,   
 where 
$\pi(\epsilon): \E \to X$ is the natural projection map
and $h_\a(\epsilon) \in W$.    
Let $(\chi_\a h_\a)$ be representatives of an element $W$ of $\Oo_{\mathfrak G, A(\mathfrak g)}$ over $X_\a$, 
and $\chi_a$ such that $\sum\limits_{\a \in I_0} \chi_a (\xi) \chi_\a (\xi)= Id_W$ 
 is the identity operator for $\xi \in X$ on the covering $\left\{X_\a, \a\in I_0\right\}$. 
  Define an $A(\mathfrak g)$-bundle injective and bilinear pairing preserving homomorphism 
  $\psi: \E \to  W^{\; \oplus \;  k} \times X$,  
$\psi (e)=\left(     (\chi_{ \a}(\xi) \; h_{ \a}(\epsilon)),  \xi \right)$,  
for ${ \a} \in I_0$,  where $\xi =\pi (\epsilon)$,  
and $k$ is the number of independent domains in the covering $\left\{X_\a \right\}$.   
 The $\psi$ sends $\E$ to the trivial $A(\mathfrak g)$-bundle   
$W^{\; \oplus \; k} \times X$. 
 We are able to extend non-degenerate bilinear pairing $(.,.)$ to 
  $W^{\oplus k}\times X$. 
 By Lemma \ref{l3.1}, the
transition functions preserve the bilinear pairing on $W$, thus for
any $\epsilon$, $\widetilde{\epsilon} \in \E_x$ one finds 
\begin{eqnarray*}
 \left(\psi(\epsilon), \psi(\widetilde{\epsilon} ) \right) 
&=& \sum_{\alpha\in I_0} \left(   (\chi_\a(\xi) \; h_\a(\epsilon)), 
 (\chi_\a(\xi) \; h_\a(\widetilde{\epsilon}) ) \right) 
\\
&= &\sum_{\alpha\in I_0} \chi^2_\a(\xi) \left(  g_{\a\b}(\xi) \; h_\b(\epsilon), 
g_{\a\b}(\xi)  \; h_\b(\widetilde{\epsilon} ) \right) 
\\
&= &(\epsilon, \widetilde{\epsilon} ). 
\end{eqnarray*}
Thus the
homomorphism $\psi$ preserves the bilinear pairing and the
restriction of the  bilinear pairing to $\psi(\E)$ is nondegenerate.  
Let us take $\widetilde{\E} =\psi( \E)^{\dagger}$ with respect to the bilinear pairing.   
According to Lemma \ref{dualba}, 
 $\widetilde{\E}$ is an $A(\mathfrak g)$-bundle on $X$,   
 $(W^{\; \oplus \; k}\times X) = \widetilde {\E}  \oplus \psi(\E) \cong \widetilde {\E} \oplus  {\E}$,    
and such $A(\mathfrak g)\oplus A(\mathfrak{g})$-bundle is trivial is a geometrical covariant.    
\end{proof} 
A twisted $A(\mathfrak g)$ exhibits  
 the following homotopy-stability property: 
\begin{proposition} 
\label{l4.1} 
The construction of a twisted $A(\mathfrak g)$-bundle $\E$ is homotopy-invariant. I.e.,  
let $\widetilde {X}$ be a compact Hausdorff space, 
$\tau_t: \widetilde {X}\to X$, for   
 $0\leq t \leq 1$,  a homotopy and $\E$ a twisted $A(\mathfrak g)$-bundle over $X$. 
Then
 $\tau^*_0 (\E) \simeq \tau^*_1 (\E)$. 
\end{proposition}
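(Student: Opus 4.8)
The plan is to reduce the homotopy invariance of twisted $A(\mathfrak g)$-bundles to the classical homotopy invariance of fiber bundles over compact Hausdorff spaces, and then to check that this reduction respects all the extra twisted $A(\mathfrak g)$-structure. First I would recall that, by Definition \ref{t3.1}, a twisted $A(\mathfrak g)$-bundle is a direct sum $\E=\bigoplus_{\l\in\C}\E(W_{{\bf z},\l})$ of fiber bundles whose transition functions $g_{\a\b,\l}$ are $A(\mathfrak g)$-module isomorphisms built out of the gluing data $H_{\a,\l}$ via the multiplication $*_2$. Pulling back along $\tau_t$ produces, for each $t$, a new collection of trivializations $\tau_t^*H_{\a,\l}$ over $\tau_t^{-1}(X_\a)$ and transition functions $\tau_t^*g_{\a\b,\l}=(\tau_t^*H_{\a,\l})*_2(\tau_t^*H_{\b,\l})^{-1}$, which are again $A(\mathfrak g)$-module isomorphisms because $*_2$ and the formation of inverses are purely algebraic operations on the fibers and commute with pullback of the base point. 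Hence $\tau_t^*(\E)$ is a twisted $A(\mathfrak g)$-bundle over $\widetilde X$ for every $t\in[0,1]$, and what must be shown is that $\tau_0^*(\E)$ and $\tau_1^*(\E)$ are isomorphic as such.

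The key step is to produce a bundle over $\widetilde X\times[0,1]$ and restrict at the two endpoints. Consider the homotopy $\tau:\widetilde X\times[0,1]\to X$, $\tau(x,t)=\tau_t(x)$; since $\widetilde X$ is compact Hausdorff, so is $\widetilde X\times[0,1]$, and we can form the pullback twisted $A(\mathfrak g)$-bundle $\tau^*(\E)$ over $\widetilde X\times[0,1]$ as above. For each fixed $\l$, $\tau^*(\E(W_{{\bf z},\l}))$ is an ordinary fiber bundle over $\widetilde X\times[0,1]$, and the classical covering-homotopy argument (using a partition of unity on the compact base, or equivalently the fact that a bundle over $\widetilde X\times[0,1]$ is isomorphic to the pullback of its restriction to $\widetilde X\times\{0\}$) gives a bundle isomorphism $\Phi_\l:\tau^*(\E(W_{{\bf z},\l}))\big|_{\widetilde X\times\{0\}}\xrightarrow{\ \sim\ }\tau^*(\E(W_{{\bf z},\l}))\big|_{\widetilde X\times\{1\}}$, i.e.\ $\tau_0^*(\E(W_{{\bf z},\l}))\cong\tau_1^*(\E(W_{{\bf z},\l}))$. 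Taking the direct sum over $\l\in\C$ yields a candidate isomorphism $\Phi=\bigoplus_\l\Phi_\l:\tau_0^*(\E)\to\tau_1^*(\E)$.

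The main obstacle is verifying that $\Phi$ is not merely an isomorphism of the underlying fiber bundles but genuinely a twisted $A(\mathfrak g)$-bundle isomorphism: its fiberwise components must be $A(\mathfrak g)$-module isomorphisms compatible with the anti-involution $\nu$, and the sections it carries (which by Definition \ref{t3.1} are valued in the spaces $\Theta(n,k,W_{{\bf z}},X)$ of prescribed rational functions, and hence satisfy the $T_G$- and $K_G$-properties \eqref{cond1}--\eqref{loconj}, Definition \ref{defcomp}, and \eqref{shushu}) must be carried to sections of the same type. To handle this I would arrange the covering-homotopy isomorphism so that over each overlap it is implemented, as in the construction preceding Lemma \ref{l3.1} and in the proof of Proposition \ref{p3.4}, by the structure maps $H_{\a,\l}$ and their $*_2$-products; since these are $A(\mathfrak g)$-module morphisms and, by Lemma \ref{l3.1}, preserve the nondegenerate invariant bilinear pairing on $G_{\bf z}\oplus G'_{\bf z}$, the resulting $\Phi_\l$ automatically intertwine the $A(\mathfrak g)$-actions and respect $\nu$. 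The prescribed-rational-function conditions depend only on the fiber variables ${\bf z}_n$ and on the algebraic structure of $W_{\bf z}$, not on the point of the base, so they are preserved under any base-point reparametrization coming from the homotopy; combined with Lemma \ref{l6.a} (which shows $A(\mathfrak g)$-module isomorphisms respect the lowest-weight subspaces $L(W)$, giving the graded version for free), this shows $\Phi$ is the required twisted $A(\mathfrak g)$-bundle isomorphism, so $\tau_0^*(\E)\simeq\tau_1^*(\E)$.
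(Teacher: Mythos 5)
Your proposal is correct and follows essentially the same route as the paper: pull back $\E$ along the full homotopy $\tau$ over $\widetilde X\times[0,1]$, use compactness to get local triviality over strips $U_\xi\times[t_{i-1},t_i]$, glue these into trivializations over $\widetilde X_\xi\times[0,1]$, and then patch with a partition of unity to obtain the isomorphism between the restrictions at $t=0$ and $t=1$. The only difference is one of emphasis — you decompose by the grading $\l$ and spell out why the resulting isomorphism respects the $A(\mathfrak g)$-module structure and the prescribed-rational-function sections, whereas the paper simply asserts that the gluing maps $h_i'$ are $A(\mathfrak g)$-bundle isomorphisms — so your write-up is, if anything, slightly more careful on that point.
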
 
\begin{proof}
 Denote by $\mathcal I$ the unit interval and let $\tau: \widetilde {X} \times \mathcal I \to X$,  
be the homotopy, so that $\tau(\widetilde{\xi}, t)=\tau_t(\widetilde{\xi})$, 
and let $\pi: \widetilde{X} \times \mathcal  I \to \widetilde{X}$ denote the projection onto the first factor.
For a collection of $\xi_i \in \widetilde{X}$, $k \ge 1$,  and an element $w_i \in W$,  
  let us choose a finite open covering $\{\widetilde{X}_{\xi_i} \}_{i=1}^k$
of $\widetilde{X}$ so that
 $\tau^*(\E)= w_i \times {\xi_i}$ 
 is trivial over each $\widetilde{X}_{\xi_i} \times \mathcal  I$. 
For each $\xi \in \widetilde{X}$ 
we can find open neighborhood ${U}_{\xi, {\bf k}}$ in $\widetilde{X}$,  
and a partition $\left\{t_i, 0 =t_0 < t_1 \cdots < t_k = 1 \right\}$ of $[0,1]$ such that
the bundle is trivial over each $U_{\xi,i}  \times [t_{i-1},t_{i}]$. 
 Set  
$U=U_{\xi, {\bf k}} = \bigcap\limits_{i=1}^k U_{\xi, i}$.   
Then the twisted bundle $\tau^*(\E)$ is trivial over $\widetilde{X}_\xi \times \mathcal  I$.  
Indeed, by choosing appropriate elements of ${\it Aut}_{X_\xi}$ we could find for $t_{i-1, i }=[t_{i-1},t_{i}]$, 
 isomorphisms of trivializations such that 
\[
h_i: \tau^*(\E)|_{\widetilde{X}_\xi \times t_{i-1, i } } 
\to  W \times \widetilde{X}_\xi \times t_{i-1, i }, 
\]
for $1 \le i \le k$.  
 For $u \in U$,  we take the $A(\mathfrak g)$-bundle isomorphisms   
\begin{align*}
h'_i (u,t_i) = ( h_{i-1}\circ h_i^{-1}) (u, t_i) \circ h_i (u,t_{i-1}): 
\\
 \tau^*(\E)|_{U_\xi \times 
[t_{i},t_{i+1}]}\to   W \times U_\xi \times [t_{i},t_{i+1}],  
\end{align*}
then
$h_i=h'_{i+1}$ on $U_\xi \times \{t_i\}$, thus they define a
trivialization on $\widetilde{X}_\xi \times [t_{i-1},t_{i+1}]$,  
and thus $\tau^*(\E)$ is trivial over  $\widetilde{X}_x \times \mathcal  I$. 
Let $\chi_i$ be a partition of unity of $\widetilde{X}$ with support of $\chi_i$ 
contained in $U_{\xi_i}$.  
For $i\geq 0$, let $q_j= \sum_{i=1}^j \chi_i$.
 In particular, $q_0=0$ and $q_n=1$. 
Consider the subspace of $U \times \mathcal  I$ consisting of points of the
pairing $(\xi, q_i(\xi))$, and let $\pi_i: \E_i \to W_i$ be the restriction
of the bundle $\E$ over $W_i$.
 Since $\E$ is trivial on $U_{x_i}\times \mathcal I$, 
the projection homeomorphism 
$W_i \to W_{i-1}$ induces homomorphisms $\varepsilon_i : \E_i \to \E_{i-1}$, which is 
identity outside  $\pi_i (U_{x_i})$, and which takes each fiber of
$\E_i$ isomorphically onto the corresponding fiber of $\E_{i-1}$.
The composition $\varepsilon= \prod_{i=1}^k \circ \varepsilon_i$  
is then an
isomorphism from $\E|_{U\times \{1\} }$ to $\E|_{U\times \{0\} }$. 
\end{proof} 
\section*{Acknowledgments}
The author would like to thank H. V. L\^e, D. Levin,  A. Lytchak, and P. Somberg for related discussions.  
\begin {thebibliography}{15}
\bibitem
{BD} A. Beilinson and V. Drinfeld, Chiral algebras, Preprint. 

\bibitem
{BZF} 
 Frenkel, E.; Ben-Zvi, D. Vertex algebras and algebraic curves. Mathematical Surveys and Monographs,
 88. American Mathematical Society, Providence, RI, 2001. xii+348 pp. 

\bibitem
{Bott} R.\,Bott, Lectures on characteristic classes and foliations.
 Springer LNM 279 (1972), 1--94.

\bibitem
{BS} R. Bott, G.Segal, The cohomology of the vector fields on a manifold, Topology
Volume 16, Issue 4, 1977, Pages 285--298.

\bibitem{DLM2} Dong C., Li H. and Mason G.,  Twisted representations of
vertex operator algebras, {\em Math. Ann.} {\bf 310} (1998),
571-600.

\bibitem{DLMZ} 
Ch. Dong, K. Liu, X. Ma, J. Zhou,  
K-theory associated to vertex operator algebras.   
Mathematical Research Letters 11, 629--647 (2004)

\bibitem
{Fei} Feigin, B. L.: Conformal field theory and
Cohomologies of the Lie algebra of holomorphic vector fields on a
complex curve. Proc. ICM, Kyoto, Japan, 71-85 (1990) 

\bibitem
{Fuks} Fuks, D. B.:Cohomology of Infinite Dimensional Lie
algebras. New York and London: Consultant Bureau 1986


\bibitem
{GKF} I.M. Gelfand, D.A. Kazhdan and D.B. Fuchs, The actions of infinite-dimensional Lie
algebras, Funct. Anal. Appl. 6 (1972) 9--13.

\bibitem
{GerSch} Gerstenhaber, M., Schack, S. D.: Algebraic Cohomology
and Deformation Theory, in: Deformation Theory of Algebras and
Structures and Applications, NATO Adv. Sci. Inst. Ser. C {\bf 247},
Kluwer Dodrecht 11-264 (1988)

\bibitem{H} Hatcher  A. Vector bundles and K-Theory, Cornell Lecture
Notes.

\bibitem
{H1} { Huang Y.-Zh.} 
A cohomology theory of grading-restricted vertex algebras. 
Comm. Math. Phys. 327 (2014), no. 1, 279--307.


\bibitem{K} Kac V., Infinite-dimensional Lie algebras,
Cambridge Univ. Press, London, 1991.

\bibitem
{Kod} Kodaira, K.: Complex Manifolds and Deformation
of Complex Structures. Springer Grundlehren {\bf 283} Berlin Heidelberg New
York 1986

\bibitem
{Ma} 
Manetti M. Lectures on deformations of complex manifolds  
(deformations from differential graded viewpoint). Rend. Mat. Appl. (7) 24 (2004), no. 1, 1--183.

\bibitem
{PT}
Patras, F., Thomas, J.-C. Cochain algebras of mapping spaces and finite group actions. 
Topology Appl. 128 (2003), no. 2-3, 189--207.


\bibitem
{Sm}
Smith, S. B. The homotopy theory of function spaces: a survey. 
Homotopy theory of function spaces and related topics, 3--39, 
Contemp. Math., 519, Amer. Math. Soc., Providence, RI, 2010.

\bibitem
{TUY}
A. Tsuchiya, K. Ueno and Y. Yamada, Conformal field theory on universal 
family of stable curves with gauge symmetries, in: {\em Advanced Studies in
Pure Math.}, Vol. 19, Kinokuniya Company Ltd.,
Tokyo, 1989, 459--566.

\bibitem
{Wag} Wagemann, F.: Differential graded cohomology and Lie algebras of holomorphic vector fields. 
Comm. Math. Phys. 208 (1999), no. 2, 521--540

  \bibitem
{Wi} E. Witten, Quantum field theory, Grassmannians and algebraic curves, Comm. Math.
Phys. 113 (1988) 529--600.

\bibitem {W} Witten E., The index of the Dirac operator in loop space,
in {\em Elliptic Curves and Modular forms in Algebraic Topology},
 Landweber P.S., SLNM 1326, Springer, Berlin, 161-186.

\bibitem{Z}
Zhu Y., Modular invariance of characters of vertex operator
algebras, {\em J.  AMS} \textbf{9}, (1996), 237-301.

\end{thebibliography}

\end{document}